\newcommand{\algmargin}{\the\ALG@thistlm}
\algnewcommand{\parState}[1]{\State%
  \parbox[t]{\dimexpr\linewidth-\algmargin}{\strut #1\strut}}
\theoremstyle{remark}
\theoremstyle{definition}
\theoremstyle{lemma}
\theoremstyle{theorem}
\theoremstyle{proposition}
\newtheorem{proposition}{Proposition}[section]
\providecommand{\keywords}[1]{\textit{Keywords:} #1}
\newcommand\score[2]{
\pgfmathsetmacro\pgfxa{#1+1}
\tikzstyle{scorestars}=[star, star points=5, star point ratio=2.25, draw,inner sep=1.3pt,anchor=outer point 3]
  \begin{tikzpicture}[baseline]
    \foreach \i in {1,...,#2} {
    \pgfmathparse{(\i<=#1?"black":"gray")}
    \edef\starcolor{\pgfmathresult}
    \draw (\i*1.75ex,0) node[name=star\i,scorestars,fill=\starcolor]  {};
   }
  \end{tikzpicture}
}
\newcommand\munderline[1]{%
  \underaccent{\bar}{#1}}
\newcommand{\taubar}{\bar{\tau}}
\newcommand{\taulow}{\munderline{\tau}}
\title{A multi-vehicle covering tour problem with speed optimization}
\author[1,${*}$]{Joshua T. Margolis}
\author[1]{Yongjia Song}
\author[1]{Scott J. Mason}
\affil[1]{Department of Industrial Engineering, Clemson University, Clemson, SC 29634, USA}
\date{}
\begin{document}

\maketitle
\begin{flushleft}
\text{*}\ {Corresponding author}\\
{E\--mail addresses: jtmargo@clemson.edu (J.T. Margolis), yongjis@clemson.edu (Y. Song),  mason@clemson.edu (S.J. Mason)}
\end{flushleft}

\begin{abstract}
{The multi-vehicle covering tour problem with time windows (MCTPTW) aims to construct a set of maximal coverage routes for a fleet of vehicles that serve (observe) a secondary set of sites given a fixed time schedule, coverage requirements, and energy restrictions. The problem is formulated as a mixed-integer second-order cone programming (MISOCP) model and is an extension of both the multi-covering tour problem and the vehicle routing problem with time windows under energy constraints. 
Further, we study a special case of the proposed model and develop a labeling algorithm to solve its Lagrangian relaxation problem, which exploits the combinatorial structure exhibited by an optimal solution to the Lagrangian relaxation.}
\end{abstract}
\keywords{vehicle routing; covering tour; Lagrangian relaxation; labeling algorithm}

\section{Introduction}
\label{sec:intro}

Unmanned (or autonomous) vehicles are aerial, terrestrial, and aquatic vehicles that operate without human intervention; they are becoming more prevalent in both commercial and governmental applications. As the technology surrounding them continues to advance, they will continue to be utilized in new and innovative ways. In particular, creating autonomous vehicles capable of adapting to real-time situations has recently become a priority in multiple industries. Rideshare companies (e.g., Uber and Lyft), technology companies (e.g., Apple and Google), and car manufacturers (e.g., Ford and General Motors) are all introducing self-driving cars capable of operating without human assistance. 

Research surrounding unmanned vehicles has highlighted the increasing desire to utilize them for a variety of purposes 
as they have many advantages: 1) they can be used to assist or even replace humans performing hazardous assignments; 2) they have increased maneuverability and deployability; and 3) they have low operation and maintenance costs \citep{Liu2016}. For instance, unmanned aerial vehicles (UAVs) can be deployed for agricultural purposes \citep{Faical2014, Herwitz2004, Tokekar2016}, traffic monitoring and management \citep{Chow2016, Kanistras2015}, forest fire detection \citep{Casbeer2006}, product delivery \citep{Coelho2017,  Poikonen2017}, and 
disaster relief efforts and humanitarian causes \citep{Chowdhury2017, Murphy2008}. For an extensive survey of additional civil UAV applications, we refer the reader to 
\citet{Otto2018}. In regards to military environments, numerous research papers focus on routing unmanned vehicles for reconnaissance \citep{Cao2017, Chung2011, Svec2014}, target engagement \citep{Shetty2008}, and surveillance missions \citep{Wallar2015}. 


In this paper, we study unmanned vehicle routing for target surveillance with specific vehicle characteristics (e.g., battery life, payload, speed, etc.) which can be considered as a variant of the vehicle routing problem (VRP). First studied by 
\citet{Dantzig1959}, the VRP aims to route a collection of vehicles at minimal distance through a set of locations such that each vehicle begins and ends its journey at a fixed depot. Many survey papers exist that provide excellent summaries of the research surrounding the VRP \citep{Cordeau2007, Laporte2007, Potvin2009, Toth2002}. Variants include the capacitated VRP \citep{Dantzig1959, Fukasawa2006,Lysgaard2004,Xu2010}, the VRP with pickup and delivery \citep{Parragh2008a, Parragh2008b}, and the VRP with time-windows (VRPTW) \citep{Braysy2005a, Braysy2005b, Kallehauge2005, Solomon1987}. Exact solution methodologies for the VRPTW have been explored previously in the literature, including 
1) branch\--and\--cut algorithms \citep{Bard2002} and 2) branch\--cut\--and\--price algorithms \citep{Desaulniers2010, Ropke2009}.
For a more detailed summary, we refer the reader to recent VRP review papers~\citep{Baldacci2012, Cordeau2000, Kallehauge2008}.


Specifically, we focus on the multi-vehicle covering tour problem with time windows (MCTPTW), which aims to construct a set of maximal coverage routes for a fleet of vehicles that serve (observe) a secondary set of sites given fixed time window constraints, coverage requirements, and energy restrictions. Vehicles are routed through coverage areas by visiting a set of waypoints (locations); we utilize the \textit{covering tour problem} (CTP) as a basis for our model. The CTP seeks to find a minimum\--length Hamiltonian cycle along a set of vertices that passes within a predetermined distance of a secondary set of nodes so that these nodes are ``covered'' by the cycle~\citep{Current1989,Gendreau1997}.~\citet{Hachicha2000} extend the CTP to include multiple routes (MCTP) and solve the problem heuristically.~\citet{Jozefowiez2015} was the first to solve the MCTP exactly via a branch\--and\--price algorithm, where the resulting subproblem is a variant of the \textit{traveling salesman problem with profits} and reduces to a \textit{ring star problem} that is solved using a branch\--and\--cut algorithm. 
\citet{Ha2013b} analyze a variant of the problem with no tour length constraints and develop a branch\--and\--cut algorithm and a two-phase metaheuristic derived from evolutionary local search to solve it.~\citet{Tricoire2012} introduce stochastic demand to the MCTP and optimize with respect to multiple objectives (i.e., cost and expected uncovered demand); they reformulate the multi-objective problem using the $\epsilon$\--constraint method and solve it via a branch\--and\--cut approach. 


Unlike the MCTP's node\--based definition of coverage, 
the close-enough arc routing problem (CEARP) 
defines coverage based upon the proximity of traversed arcs 
in relation to the location of interest \citep{Shuttleworth2008}; we implement both definitions in our study. Since the CEARP is a generalization of the \textit{directed rural postman problem} (DRPP), \citet{Shuttleworth2008} utilize DRPP heuristics as a solution methodology after identifying arc subsets. \citet{Ha2013a} solve the CEARP exactly using a mixed\--integer programming formulation and cutting-plane approaches. Both \citet{Ha2013a} and \citet{Shuttleworth2008} examine the problem from the perspective of meter reading. Furthermore, \citet{Coutinho2016} model the CEARP as a second\--order cone and solve it exactly using a branch-and-bound algorithm; they test their approach on two- and three-dimensional instances. Coverage constraints, moreover, can be time-dependent; thus, it is important to include the speed at which an arc is traversed as a degree of freedom. 


We model the multi-vehicle routing problem under energy, time window, and covering constraints. We formulate a deterministic mixed\--integer second-order cone programming (MISOCP) model to construct a set of maximal coverage routes that service (observe) a secondary set of sites under a fixed time schedule. Our model is an extension of both the MCTP and the VRPTW under energy constraints. Our model's solution accounts for the vehicles' energy limitations and prioritizes risk\--averse routes. We implement our model by examining the surveillance of both low and high priority targets. The remainder of the paper is organized as follows: We define our model and solution approach in the Section \ref{sec:dmod} and study a special case which we solve using Lagrangian relaxation in Section \ref{sec:langrangian}. Next, we discuss computational experiments in Section \ref{sec:numexperiments} and conclude with discussions on future research directions in Section \ref{sec:conc}.

\section{Model}\label{sec:dmod}
We seek to route unmanned vehicles for the purpose of target surveillance. In particular, we consider a fleet of vehicles that are used to cover a set of stationary targets. These targets, moreover, can be covered by traveling in proximity to and by idling near them. The process of surveilling targets of interest immediately subjects the fleet to threats of detection. Supplementary problems thus aim to evade enemy threats such as radar detection or combatant encounters (see, e.g., \citealp{Alotaibi2018, Xia2017, Zabarankin2006}), and hence seek to minimize the potential risk associated with a vehicle's route. We study the tradeoff between developing routes that are both risk-averse and coverage\--intensive, while accounting for the limitations imposed on the fleet with respect to time, energy, speed, and risk.

Specifically, the model determines (i) the number of vehicles to deploy and their respective routes, (ii) the vehicle speed along every traversed arc, and (iii) the service time, i.e., the time spent surveilling while idling at a given waypoint, such that the minimum level of surveillance required for each target is achieved. Vehicle energy limits are considered and high-valued surveillance targets are prioritized. Coverage radii are defined per vehicle such that a target must pass within the coverage area for some specified time to successfully surveil the respective target. Moreover, risk radii are defined per target such that a vehicle's threat of detection is dependent upon the amount of time spent within some range of a given target.

\subsection{Notation and Model Formulation}
\label{sec:model}
We formulate the proposed model over a directed network $G=(V\cup W,A)$ with a set of $K$ vehicles. The node set $V = \{0,1,\ldots, n,n+1\}$ consists of all waypoints (locations) that can be visited, where $0$ and ${n+1}$ are the insertion and extraction depots, respectively. We assume that waypoints have been previously determined and are fixed. For notational purposes, we define $V^{0}=V\backslash\{0,{n+1}\}$. 
Let $W$ be the set of targets and $A$ be the set of arcs in the network. 

Routing decisions are incorporated for each arc/vehicle pair via a binary decision variable $x_{ijk}$ which equals $1$ if vehicle $k$ travels along arc $(i,j)\in A$ and $0$ otherwise. In the context of surveillance, vehicles may only visit particular waypoints during any specific period of availability. This could be due to security rounds or temporary site clearances.
Hence, a time window $[a_i,b_i]$ is associated with each waypoint $i\in V$ for a given time horizon, where $[a_{0},b_{0}] = [a_{{n+1}},b_{{n+1}}]$ represents the earliest departure and the latest arrival times to the insertion and extraction depots, respectively. 
The resulting set of routes can be repeated in each time interval to account for a periodic time horizon. The decision variable $s_{ik}$ represents the time at which vehicle $k$ arrives at waypoint $i\in V$, whereas decision variable $y_{ik}$ represents the amount of time vehicle $k$ idles at location $i\in V^{0}$. We do not allow surveillance to be conducted at the depots. Idling (or servicing) and waiting are defined as loitering while active and inactive, respectively. We assume that the time a vehicle spends waiting at a waypoint prior to conducting surveillance does not influence the vehicle's risk of detection nor its level of observation, and vehicles can only conduct service at a waypoint $i\in V^0$ within its associated time window $[a_i,b_i]$. Therefore, vehicles can arrive prior to the start of the time window and wait to begin service. Moreover, let $\mathbf{d}({i,j})$ define the distance between nodes $i\in V$ and $j \in V\cup W$. We assume that each vehicle $k$ travels at a constant speed $v_{ijk}$ on each arc $(i,j)\in A$, so that $\mathbf{d}({i,j}) = t_{ijk}\times v_{ijk}$, where $t_{ijk}$ is the travel time for vehicle $k\in K$ to traverse arc $(i,j)\in A$.

\subsubsection{Risk}\label{sec:risk}
We assume that each target $w\in W$ has an associated risk radius, $\hat{\eta}_w$, which defines a circular area around it in which a vehicle can be detected (e.g., by a radar detection system installed at the target). Due to a vehicle's ability to travel with different speeds on different arcs, we define the risk in terms of a risk index per unit time. The risk index is directly related to the risk factor, $\sigma_w$, and indirectly related to the Euclidean distance between the target and the vehicle (we assume a horizontal plane of motion). The former depends upon the target's detection attributes and is assumed to be a constant. Let $[i,j]$ denote the line segment defined by arc $(i,j)\in A$. We define $[i^w,j^w]\subseteq[i,j]$ as the intersection of line segment $[i,j]$ with the detection area associated with a target $w$ (see Figure \ref{fig: riskarea} for an illustration).
\begin{figure}[htbp]
\centering
\begin{subfigure}[t]{.5\textwidth}
  \centering
  \includegraphics[width=0.75\textwidth]{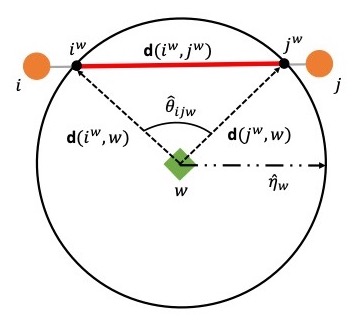}
  \captionsetup{justification=centering}
\caption[caption]{Detection area associated with a target $w$}
\label{fig: riskarea}
\end{subfigure}%
\begin{subfigure}[t]{.5\textwidth}
  \centering
  \includegraphics[width=0.76\textwidth]{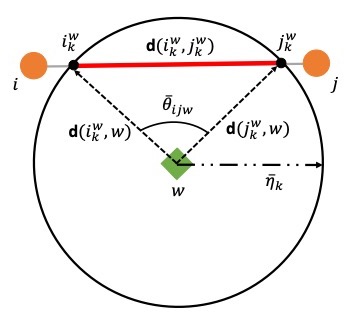}
\captionsetup{justification=centering}
\caption{Coverage area associated with a vehicle $k$\hspace{\textwidth} surrounding a target $w$}
\label{fig: coveragearea}
\end{subfigure}
\caption{Illustration of detection and coverage areas}
\end{figure}

Thus, as in \citet{Zabarankin2002}, the associated risk index for a vehicle at location $h\in[i^w,j^w]$ with respect to target $w\in W$ per unit time is defined as
\begin{equation}
r_{hw} = \sigma_w\frac{1}{\mathbf{d}({h,w})^2}, 
\end{equation}
where $\mathbf{d}(\cdot,\cdot)$ gives the Euclidean distance between two points (for notational convenience, we use $w$ to denote both the index of a target and its location). Using this definition, we can examine a particular segment $[i^w,j^w]$ and its contribution to the total route's risk. Define $P_k$ as the route for vehicle $k\in K$ consisting of a sequence of arcs $(i,j)$ traversed by the vehicle.
A vehicle's total risk accumulated from a specific target is: 
\begin{equation}
\sum_{(i,j)\in P_k}\int_{T([i,j])} r_{h(t)w} dt + \sum_{i\in V^0}r_{iw}y_{ik},
\label{eq:riskint}
\end{equation}
where $y_{ik}$ denotes the amount of time vehicle $k$ idles at waypoint $i\in V^0$, $T(\cdot)$ is the interval of time traveling along a line segment $[i,j]$, $h(t)$ is the vehicle's location at time $t$, and $dt$ is unit time.
Each arc $(i,j)\in P_k$ accrues risk from target $w$ that is proportional to the segment $[i^w,j^w]$ within the detection area. As defined in \citet{Zabarankin2002}, the total risk associated with arc $(i,j)\in P_k\subseteq A$ with respect to target $w\in W$ is:
\begin{equation}\label{eq:riskpart}
\int\displaylimits_{T\([i^w,j^w]\)} r_{h(t)w} dt = \int\displaylimits_{T\([i^w,j^w]\)}\sigma_w\frac{1}{\mathbf{d}({h(t),w})^2} dt = \sigma_w\frac{\hat{\theta}_{ijw}}{\sin{\hat{\theta}_{ijw}}\mathbf{d}({i^w,w})\mathbf{d}({j^w,w})}\frac{\mathbf{d}(i^w,j^w)}{\mathbf{d}(i,j)}t_{ijk},
\end{equation}
where $\hat{\theta}_{ijw}$ represents the angle between two vectors, from $w$ to $i^w$ and from $w$ to $j^w$, as illustrated in Figure~\ref{fig: riskarea}.


\noindent Define for each $(i,j)\in A$:
\begin{equation}
r_{ijw} := \sigma_w\frac{\hat{\theta}_{ijw}}{\sin{\hat{\theta}_{ijw}}\mathbf{d}({i^w,w})\mathbf{d}({j^w,w})},
\end{equation}
and for each $i\in V^0$:
\begin{equation}
r_{iw} := \begin{cases}
 \sigma_w\frac{1}{\mathbf{d}({i,w})^{2}} &\ \text{if}\ \mathbf{d}({i,w}) \le \hat{\eta}_w,\\
0 &\ \text{otherwise}.
\end{cases}
\end{equation}
Using these concepts, the risk incurred by a given vehicle $k\in K$ from target $w\in W$ over the course of its trajectory, i.e., \eqref{eq:riskint}, can be rewritten as:
\begin{equation}
\sum_{w\in W}\left(\sum_{(i,j)\in A}r_{ijw}\frac{\mathbf{d}(i^w,j^w)}{\mathbf{d}({i,j})}  t_{ijk} +\sum_{i\in V^{0}}r_{iw}y_{ik}\right). 
\end{equation}

\subsubsection{Coverage}\label{sec:coverage}
We assume that each vehicle $k\in K$ has an associated coverage area defined by its coverage radius, $\bar{\eta}_k$. Ensuring that a target passes within the coverage area during deployment is equivalent to ensuring that a vehicle is routed within some range of the target. We define $[i_k^w,j_k^w]\subseteq[i,j]$ as the intersection of line segment $[i,j]$ with the coverage area associated with a vehicle $k$ surrounding target $w$. Thus, we can define the level of surveillance obtained by a vehicle equivalently as the risk of detection of the stationary targets in relation to the mobile vehicle (see Figure \ref{fig: coveragearea}).
Similar to the risk of detection as defined in Section \ref{sec:risk}, we can express the total level of surveillance on target $w\in W$ obtained by vehicle $k\in K$ while traversing arc $(i,j)\in A$ as 
\begin{equation}
\rho_k\frac{\bar{\theta}^w_{ijk}}{\sin{\bar{\theta}^w_{ijk}}\mathbf{d}({i_k^w,w})\mathbf{d}({j_k^w,w})}\frac{\mathbf{d}(i_k^w,j_k^w)}{\mathbf{d}(i,j)}t_{ijk},
\end{equation}
where $\rho_k$ is the coverage factor associated with vehicle $k\in K$ representing its surveillance capabilities, and $\bar{\theta}^w_{ijk}$ represents the angle between two vectors, from $w$ to $i^w_k$ and from $w$ to $j^w_k$, as illustrated in Figure~\ref{fig: coveragearea}. 

\noindent Define for each $(i,j)\in A$:
\begin{equation}
c_{ijwk} := \rho_k\frac{\bar{\theta}^w_{ijk}}{\sin{\bar{\theta}^w_{ijk}}\mathbf{d}({i_k^w,w})\mathbf{d}({j_k^w,w})},
\end{equation}
and for each $i\in V^0$:
\begin{equation}
c_{iwk} := \begin{cases}
 \rho_k\frac{1}{\mathbf{d}({i,w})^{2}} &\ \text{if}\ \mathbf{d}({i,w}) \le \bar{\eta}_k,\\
0 &\ \text{otherwise}.
\end{cases}
\end{equation}
Using these concepts, the total level of coverage on target $w$ obtained by a given vehicle $k\in K$ throughout its trajectory is calculated by:
\begin{equation}
\sum_{w\in W}\left(\sum_{(i,j)\in A}c_{ijwk}\frac{\mathbf{d}(i_k^w,j_k^w)}{\mathbf{d}({i,j})}  t_{ijk} +\sum_{i\in V^{0}}c_{iwk}y_{ik}\right). 
\end{equation}

In our proposed model, we make the following additional assumptions. First, we assume that a vehicle can cover multiple locations simultaneously as it traverses an arc, i.e., observation times are non-competitive. Second, we allow multiple vehicles to traverse a given arc. Additionally, we assume that a target $w\in W$ can be surveilled via multiple passes but a vehicle $k\in K$ can visit any waypoint no more than once during the planning horizon. The former is due to the fact that surveillance need not be continual, while the latter is to avoid additional risk brought by multiple visits.

\subsubsection{Energy}\label{sec:energy}
Each vehicle has a limited energy capacity $E^{\text{max}}_k$, such as fuel, battery, etc. It is well-known that the energy consumption for a vehicle to travel along an arc $(i,j)\in A$ depends on the speed at which it travels on the arc. For simplicity, we assume that a vehicle's speed must remain constant over a given arc, and we allow for instantaneous change of speed between different arcs. According to~\citet{Ehsani2018}, the engine power output of a vehicle per unit time can be expressed as a function of its speed: 
\begin{align}
P^e_{ijk} & = v_{ijk}\left(m_vgf_r\cos{(\alpha)}+ \frac{1}{2}\rho_aC_DA_fv_{ijk}^2+M_vg\sin{(\alpha)} + m_v\delta\frac{\partial{v_{ijk}}}{\partial{t_{ijk}}}\right),\\
& = v_{ijk}\left(\beta+ \gamma v_{ijk}^2+\xi + \psi\right),\\
&  = v_{ijk}\left(\beta+ \gamma v_{ijk}^2\right),
\label{poweroutput}
\end{align}
where $\beta$ is the rolling resistance, $\gamma v_{ijk}^2$ represents aerodynamic drag, $\xi$ accounts for the power lost due to the slope of a surface, and $\psi$ is the acceleration of the vehicle. 
We let $\xi = \psi = 0$ since we assume a two-dimensional plane of motion and vehicles are at a steady-state velocity.

Let $E_{ijk}$ be the energy consumption of vehicle $k\in K$ over arc $(i,j)\in A$. Since $E_{ijk} = P^e_{ijk}t_{ijk}$, and $\mathbf{d}({i,j}) = v_{ijk}t_{ijk}$, it follows that 
\begin{align}
E_{ijk} & = \mathbf{d}({i,j})\beta + \gamma \mathbf{d}({i,j}) v_{ijk}^2.
\end{align}
Furthermore, we assume that various power losses due to transmission/drivetrain efficiencies, discharge efficiency, inverter efficiency, motor efficiency, and idling are negligible. We also assume that the vehicle uses no energy (i.e., the vehicle is off) when it waits or idles at a given waypoint.

\subsubsection{Formulation}\label{sec:formulation}

A summary of the notation is provided below.

\noindent {\sc Sets}:
\begin{center} 
\begin{tabular}{p{0.5in}p{5.5in}}
$V$   & Set of nodes that \emph{can} be visited $(V = \{0,1,\ldots, n,n+1\})$ \\
$V^{0}$ & $V\backslash\{0,{n+1}\}$\\
$W$   & Set of targets that \emph{must} be covered $(W=\{w_1,\dots, w_m\})$ \\
$K$   & Set of vehicles $(K=\{k_1,\dots, k_l\})$ \\
$A$   & Set of arcs in $G$, where $A=\left(V\backslash\{{n+1}\}\times V\right)\backslash\left(\{(i,i) : i \in V\} \cup \{(0,{n+1})\}\right)$\\
\end{tabular}
\end{center}
\vskip 1pc
\noindent {\sc Decision Variables}:
\begin{center}
\begin{tabular}{p{0.5in}p{5.5in}}
$x_{ijk}$ & Indicates whether or not arc $(i,j)\in A$ is traversed ($x_{ijk} = 1$) or not ($x_{ijk} = 0$) by vehicle $k\in K$\\
$s_{ik}$ & Time vehicle $k\in K$ arrives at $i\in V$\\
$t_{ijk}$ & Time it takes vehicle $k\in K$ to traverse arc $(i,j)\in A$\\
$v_{ijk}$ & Velocity of vehicle $k\in K$ along arc $(i,j)\in A$\\
$y_{ik}$ & Time vehicle $k\in K$ idles at node $i\in V^{0}$\\
\end{tabular}
\end{center}
\vskip 1pc
\noindent {\sc Parameters}:
\begin{center} \vskip -1pc
\begin{longtable}{p{0.5in}p{5.5in}}
${0}, {n+1}$   & Insertion and extraction depot, respectively\\
$\mathbf{d}({i,j})$   & Distance between nodes $i,j\in V\cup W$\\
$\mathbf{d}(i_k^w,j_k^w)$ & Distance along arc $(i,j)\in A$ in which vehicle $k\in K$ covers target $w\in W$\\
$\mathbf{d}({i^w,j^w})$ & Distance along arc $(i,j)\in A$ in which vehicle $k\in K$ risks detection from target $w\in W$\\
$\bar{p}_w$ & Priority factor for target $w\in W$\\
$\hat{p}_k$ & Priority factor for vehicle $k\in K$\\
$r_{ijw}$ & Risk index for arc $(i,j)\in A$ associated with target $w\in W$\\
$r_{iw}$ & Risk index for waypoint $i\in V$ associated with target $w\in W$\\
$c_{ijwk}$ & Coverage index for arc $(i,j)\in A$ associated with target $w\in W$  for vehicle $k\in K$\\
$c_{iwk}$ & Coverage index for waypoint $i\in V$ associated with target $w\in W$ for vehicle $k\in K$\\
$\sigma_w$ & Risk factor for node $w\in W$\\
$\rho_k$ & Coverage factor for vehicle $k\in K$\\
$t_w$ & Minimum allowable time to survey node $w\in W$\\
${\taubar}_k$ & Maximum speed $(\ge0)$ for vehicle $k\in K$ \\
$\taulow_k$ & Minimum speed $(\ge0)$ for vehicle $k\in K$\\
$E^{\text{max}}_k$ & Maximum energy capacity for vehicle $k\in K$\\
$\beta$ & Rolling resistance coefficient\\
$\gamma$ & Aerodynamic drag coefficient\\
$[a_i,b_i]$ & Time window for node $i\in V$\\
$ \Mbar_{ij}$ & Big-M constant where $\Mbar_{ij} = \max\{b_i+\mathbf{d}({i,j})/\munderline{\tau}-a_j, 0\}$ for all $(i,j)\in A$\\
\end{longtable}
\end{center}
\addtocounter{table}{-1}
\noindent We optimize the objectives
\begin{subequations}
\begin{align}
\label{netcoverage}
&  \max\quad \Ccal = \sum_{w\in W}\bar{p}_w\sum_{k\in K}\left(\sum_{(i,j)\in A}c_{ijwk}\frac{\mathbf{d}(i_k^w,j_k^w)}{\mathbf{d}({i,j})}  t_{ijk} +\sum_{i\in V^{0}}c_{iwk}y_{ik}\right), \\
\label{netrisk}
& \min\quad \Rcal = \sum_{w\in W}\sum_{k\in K}\hat{p}_k\left(\sum_{(i,j)\in A}r_{ijw}\frac{\mathbf{d}({i^w,j^w})}{\mathbf{d}({i,j})}  t_{ijk} +\sum_{i\in V^{0}}r_{iw}y_{ik}\right), 
\end{align}
subject to
\begingroup
\allowdisplaybreaks
\begin{align}
\label{nodecoverage}
& \sum_{k\in K}\left(\sum_{(i,j)\in A}c_{ijwk}\frac{\mathbf{d}(i_k^w,j_k^w)}{\mathbf{d}({i,j})}  t_{ijk} +\sum_{i\in V^{0}}c_{iwk}y_{ik}\right) \ge t_w, && \forall\ w\in W,\\
\label{drt}
& x_{ijk}\mathbf{d}({i,j}) \le t_{ijk}v_{ijk}, &&\forall\ (i,j)\in A, k\in K,\\
\label{energy}
& \sum_{(i,j)\in A} \left(\mathbf{d}({i,j})\beta x_{ijk} + \gamma \mathbf{d}({i,j}) v_{ijk}^2\right) \le E^{\text{max}}_k, &&\forall\ k\in K,\\
\label{depotcontinuity}
& x_{ijk} \le \sum_{\substack{j'\in V^0:\\ (0,j')\in A}} x_{0j'k}, &&\forall\ (i,j)\in A, k\in K,\\
\label{base1}
&\sum_{\substack{j\in V:\\(0,j)\in A}} x_{{0}jk} = \sum_{\substack{j\in V:\\(j,{n+1})\in A}}x_{j{(n+1)}k}, &&\forall\ k\in K,\\
\label{canvisit}
& \sum_{\substack{j\in V:\\(i,j)\in A}} x_{ijk} \le 1, && \forall\ i\in V, k \in K,\\
\label{flowbalance}
& \sum_{\substack{j\in V:\\ (i,j)\in A}}x_{ijk} = \sum_{\substack{j\in V:\\ (j,i)\in A}}x_{jik},  &&\forall\ i\in V^{0}, k \in K,\\
\label{speedbdt}
& \frac{\mathbf{d}({i,j})}{{\taubar}_k}x_{ijk} \leq t_{ijk} \leq \frac{\mathbf{d}({i,j})}{\taulow_k}x_{ijk}, &&\forall\ (i,j)\in A, k\in K,\\
\label{speedbdv}
& {\taulow_k}x_{ijk} \le v_{ijk} \le {\taubar}_kx_{ijk}, && \forall\ (i,j)\in A, k\in K,\\
\label{arrivaltime}
& s_{ik} + (t_{ijk}+y_{ik}) - s_{jk} \le (1 - x_{ijk})\Mbar_{ij}, && \forall\ (i,j)\in A, k\in K,\\
\label{timewindowslb}
& a_i\sum_{\substack{j\in V:\\(i,j)\in A}}x_{ijk} \le s_{ik}, && \forall\ i\in V^{0}, k\in K,\\
\label{timewindowsub}
& s_{ik} +y_{ik} \le b_i\sum_{\substack{j\in V:\\(i,j)\in A}}x_{ijk}, && \forall\ i\in V^{0}, k\in K,\\
\label{schedule}
& a_0 \le s_{ik} \le b_{n+1}, && \forall\ i\in \{0,{n+1}\}, k\in K,\\
\label{ydepot}
& y_{ik} = 0, && \forall\ i\in \{0,{n+1}\}, k\in K,\\
\label{ywaypoint}
& y_{ik} \ge 0, && \forall\ i \in V^{0}, k\in K,\\
\label{x}
&x_{ijk}\in\{0,1\}, &&\forall\ (i,j)\in A, k\in K.
\end{align}
\endgroup
\label{fullmodel}%
\end{subequations}

Objective \eqref{netcoverage} maximizes a weighted sum of coverage times for all targets as some targets are of higher priority than others, while objective \eqref{netrisk} minimizes the fleet's risk of detection. Constraint set \eqref{nodecoverage} ensures that the required minimum amount of surveillance is obtained for each target. Constraints \eqref{drt} relate distance to steady-state speed and travel time; the equality relationship $\mathbf{d}({i,j}) = v_{ijk}t_{ijk}$ is relaxed to an inequality without affecting the optimal value of \eqref{fullmodel} to allow for a second-order cone reformulation (see Proposition \ref{rotatedconeproposition}). Constraints \eqref{energy} ensure that each vehicle does not exceed its maximum energy capacity. Constraints \eqref{depotcontinuity} ensure that every utilized vehicle is deployed from the insertion depot $0$, while constraints \eqref{base1} make sure that the same number of vehicles that leave the insertion depot arrive at the extraction site ${n+1}$. For a given vehicle $k\in K$ and waypoint $i\in V$, the constraint set \eqref{canvisit} requires that no waypoint is visited more than once per vehicle. Constraints \eqref{flowbalance} enforce flow balance and effectively require that vehicles may only start or end their route at a depot. Constraints \eqref{speedbdt} and \eqref{speedbdv} enforce the vehicle speed restrictions, while constraints \eqref{arrivaltime} -\eqref{schedule} ensure schedule feasibility with respect to time window constraints. In particular, constraints \eqref{arrivaltime} influence vehicle speeds by limiting the amount of time they can devote toward surveilling any subset of targets. Moreover, although a vehicle can arrive at a given waypoint prior its earliest arrival time, constraints \eqref{timewindowslb} ensure that service (surveillance) starts within the time window. Constraints \eqref{timewindowsub} define $y_{ik}$ as the idling time at waypoint $i\in V$ for each vehicle $k\in K$ and do not allow a vehicle to conduct surveillance at a waypoint beyond the time window; constraints  \eqref{timewindowslb} and  \eqref{timewindowsub} restrict the total idle time to be at most the time window's length. 
Constraints \eqref{schedule} limit total time of each vehicle's route and constraints \eqref{ydepot} do not allow service to be conducted at either the insertion or extraction sites. Finally, constraint sets \eqref{ywaypoint} and \eqref{x} are the non-negativity and integrality restrictions on the decision variables, respectively. We have assumed an identical fleet of vehicles in our experiments. That is, all vehicles have the same surveillance range ($\bar{\eta}$), importance factor ($\phat$), coverage factor ($\rho$), energy capacity ($E^{\text{max}}$), and boundary speeds ($\taubar$ and $\taulow$). 

\begin{proposition}
The second-order cone constraint \eqref{drt} will be active for at least one optimal solution. 
\label{rotatedconeproposition} 
\end{proposition}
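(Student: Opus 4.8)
The plan is to start from an arbitrary optimal solution $(x,s,t,v,y)$ of \eqref{fullmodel} and to produce a second optimal solution that keeps $x$, $s$, $t$, and $y$ fixed and only lowers the speed variables $v_{ijk}$ until \eqref{drt} holds with equality for every $(i,j)\in A$ and every $k\in K$. The first step is to record the structural fact that makes this work: the variable $v_{ijk}$ appears in exactly three constraint families---the cone constraint \eqref{drt}, the energy budget \eqref{energy} (through the increasing term $\gamma\,\mathbf{d}(i,j)\,v_{ijk}^{2}$), and the speed bounds \eqref{speedbdv}---and it appears in neither objective \eqref{netcoverage}--\eqref{netrisk}, nor the coverage requirement \eqref{nodecoverage}, nor any scheduling constraint \eqref{arrivaltime}--\eqref{schedule}. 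Consequently, decreasing $v_{ijk}$ can only relax \eqref{energy} and cannot change either objective value, so the argument reduces to a coordinatewise check on each arc/vehicle pair.

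Next I would carry out that check. Fix $(i,j)\in A$ and $k\in K$. If $x_{ijk}=0$, then \eqref{speedbdt} forces $t_{ijk}=0$ and \eqref{speedbdv} forces $v_{ijk}=0$, so \eqref{drt} already reads $0\le 0$ and is active. If $x_{ijk}=1$, then \eqref{speedbdt} gives $\mathbf{d}(i,j)/\taubar_k\le t_{ijk}\le\mathbf{d}(i,j)/\taulow_k$, so in particular $t_{ijk}>0$ (here one uses $\mathbf{d}(i,j)>0$ and $0<\taubar_k<\infty$; a zero-length arc, or a vehicle with $\taubar_k=0$ that cannot traverse any arc, is degenerate and excluded by assumption). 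Define $v'_{ijk}:=\mathbf{d}(i,j)/t_{ijk}$. Then $t_{ijk}\,v'_{ijk}=\mathbf{d}(i,j)=x_{ijk}\,\mathbf{d}(i,j)$, so \eqref{drt} is active for this pair; the original inequality \eqref{drt} gives $v'_{ijk}\le v_{ijk}\le\taubar_k$, and \eqref{speedbdt} gives $v'_{ijk}=\mathbf{d}(i,j)/t_{ijk}\ge\taulow_k$, so \eqref{speedbdv} still holds; and since $v'_{ijk}\le v_{ijk}$ the left-hand side of \eqref{energy} does not increase, so \eqref{energy} still holds. No other constraint involves $v_{ijk}$.

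Finally I would observe that these single-coordinate updates are mutually independent, so performing all of them simultaneously yields a feasible point $(x,s,t,v',y)$ with the same $x$, $t$, and $y$, hence the same value of both \eqref{netcoverage} and \eqref{netrisk}; it is therefore still optimal, and by construction \eqref{drt} is active for every $(i,j)\in A$ and $k\in K$ (which in particular establishes the stated claim). I do not anticipate a genuine obstacle: the only points requiring care are the bookkeeping of exactly which constraints contain $v_{ijk}$, the degenerate-data caveats above, and the (immediate) observation that the simultaneous perturbation remains feasible because the only constraint it affects other than \eqref{drt} and \eqref{speedbdv}---namely the energy budget \eqref{energy}---is merely relaxed.
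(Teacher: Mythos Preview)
Your proposal is correct and follows essentially the same approach as the paper: both arguments replace each speed by $v'_{ijk}=\mathbf{d}(i,j)/t_{ijk}$ and verify that this only relaxes the energy budget \eqref{energy}, stays within the speed bounds \eqref{speedbdv} (via \eqref{speedbdt}), and leaves the objectives and all other constraints untouched. Your presentation is in fact a bit more careful than the paper's---you explicitly handle the $x_{ijk}=0$ case and catalogue exactly which constraints involve $v_{ijk}$---but the underlying idea is identical.
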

\begin{proof}
Assume for contradiction that for any optimal solution $(x^{*}, s^{*}, t^{*},v^{*}, y^{*})$ of \eqref{fullmodel}, there exists some $k\in K$ and some $(i,j)\in A$ such that $x^{*}_{ijk} = 1$ and $t_{ijk}^{*}v_{ijk}^{*} > \mathbf{d}({i,j})x^{*}_{ijk}$. In particular, note that
\begin{equation}
{\taulow_k}x^{*}_{ijk} \le v^{*}_{ijk} \le {\taubar}_kx^{*}_{ijk}, \quad \forall\ (i,j)\in A, k\in K,
 \end{equation}
 and
 \begin{equation}
 \sum_{(i,j)\in A} \mathbf{d}({i,j})\beta x^{*}_{ijk} + \gamma \mathbf{d}({i,j}) {v^{*}_{ijk}}^2 \le E^{\text{max}}_k.
 \end{equation}
 Moreover, let $\vbar_{ijk} = \frac{\mathbf{d}({i,j})}{t^{*}_{ijk}}$ such that $t_{ijk}^{*}\vbar_{ijk} = \mathbf{d}({i,j})$. It follows then that $\vbar_{ijk} < v_{ijk}^{*}$ and so
 \begin{equation}
  \sum_{(i,j)\in A} \mathbf{d}({i,j})\beta x^{*}_{ijk} + \gamma \mathbf{d}({i,j}) {\vbar_{ijk}}^2 \le E^{\text{max}}_k.
 \end{equation}
Further, since 
\begin{equation}
 \frac{\mathbf{d}({i,j})}{\bar{\tau}}x^{*}_{ijk} \leq t_{ijk}^{*} \leq \frac{\mathbf{d}({i,j})}{\munderline{\tau}}x^{*}_{ijk},
 \end{equation}
 we know 
 \begin{equation}
{\taulow_k}x^{*}_{ijk}  \leq \vbar_{ijk} \leq {\taubar}_kx^{*}_{ijk}.
 \end{equation}
Thus, there exists an alternative optimal solution in which $t_{ijk}^{*}\vbar_{ijk} = \mathbf{d}({i,j})x^{*}_{ijk}$ for all $(i,j)\in A$ and for all $k\in K$.
\end{proof}

\subsection{Solution Methodology}
\label{sec:sm}

Model \eqref{fullmodel} is a bi-objective optimization problem where the fleet's ability to gather information is hindered by
the risk accumulated during deployment. Thus, our intention is to derive a \emph{Pareto curve} which depicts the tradeoff between the two competing objectives. The \emph{efficient frontier} characterizes the set of non-dominated solutions and can be generated via several methodologies (see, e.g., \citealp{Ehrgott2005}). We use the $\epsilon$-constraint method \citep{Haimes1971}, which can generate all possible \emph{efficient solutions} in the absence of convexity so long as $\epsilon$ is chosen appropriately \citep{Olag71}. We chose to bound the total risk (objective \eqref{netrisk}) with $\epsilon$-constraints. That is, we solve
\begin{subequations}
\begin{align}
\text{max } & \Ccal, \label{epsilonconnobj}\\
\label{epsilonconn1}
\text{s.t. } & \Rcal \le \epsilon, \\
& \text{constraints~\eqref{nodecoverage}--\eqref{x}},\notag
\end{align}
\label{epsilonmodel}%
\end{subequations}
for a given value of $\epsilon$ that admits a feasible solution under constraints \eqref{nodecoverage}--\eqref{x}. Due to the continuous nature of our objective functions, we aim to approximate the Pareto frontier by solving model \eqref{epsilonmodel} iteratively. Thus, given the total risk associated with the $n^\text{th}$ iteration ($\Rcal^n$) and a pre-defined step\--size $\delta$, we define
\begin{equation} 
\epsilon^{n+1}\leftarrow \Rcal^n + \delta.
\end{equation}
\break We can find a sufficient number of non-dominated points by choosing $\delta$ small enough. However, we cannot guarantee we generate the entire Pareto curve. 

\section{Special Cases}
\label{sec:langrangian}
In this section, we consider several special cases of the proposed model and develop specialized solution approaches for these cases. Consider the following simplifying assumptions: 1) the fleet's energy supply and the time windows associated with each waypoint can be approximated by an operational deadline ($T = b_{{n+1}} - a_{0}$), and 2) vehicles are not subject to risk of detection, i.e., the risk constraint~\eqref{epsilonconn1} is relaxed. Specifically, the above assumptions can be fulfilled by letting $[a_i, b_i] = [0,\infty)$ for all $i \in V^{0}$ and $E^{\text{max}}_k = \infty$ for all  $k\in K$, and introducing constraints:
\begin{equation}
\sum_{(i,j)\in A}t_{ijk} + \sum_{i\in V^{0}}y_{ik} \le T, \quad\forall\ k\in K.
\label{operationaldeadlineconstraint}
\end{equation}
We refer to this new model as model (\textup{\ref{epsilonmodel}$'$}). Without the constraints on the risk of detection~\eqref{epsilonconn1}, model (\textup{\ref{epsilonmodel}$'$}) is loosely coupled in the sense that each vehicle $k\in K$ can be treated independently if constraints \eqref{nodecoverage} are relaxed. This motivates a Lagrangian relaxation based approach to solve model (\textup{\ref{epsilonmodel}$'$}). For each $w\in W$, given a Lagrangian multiplier $\lambda_w$ associated with constraint \eqref{nodecoverage} for target $w$, the Lagrangian relaxation of model (\textup{\ref{epsilonmodel}$'$}) is formulated as:
\begin{subequations}\label{epsilonconn0}
\begin{align}
\text{max } & \Ccal - \sum_{w \in W}\lambda_w\Bigg(\sum_{k \in K} \Big(\sum_{(i,j) \in A} c_{ijwk}\frac{\mathbf{d}(i_k^w,j_k^w)}{\mathbf{d}({i,j})} t_{ijk} + \sum_{i \in V^{0}} c_{iwk}y_{ik}\Big)-t_w\Bigg), \\
\text{s.t. } & \text{constraints~\eqref{drt}--\eqref{x}}, \eqref{operationaldeadlineconstraint}.
\end{align}
\end{subequations}
The Lagrangian relaxation \eqref{epsilonconn0} becomes separable by $k\in K$ for each fixed vector $\boldsymbol{\lambda}$ of Lagrangian multipliers.  We define $\Pcal$ to be the set of all feasible paths from node $0$ to node ${n+1}$. Due to the homogeneity assumption of the fleet, define $\dbar_{ijw} := \mathbf{d}(i^w_k,j^w_k)$, $c_{iw} := c_{iwk}$, $c_{ijw} := c_{ijwk}$, for all $k\in K$. Hence, given a fixed $\boldsymbol{\lambda}\in\R{|W|}$, \eqref{epsilonconn0} can be reformulated as:
\begin{subequations}
\begin{align}
f(\boldsymbol{\lambda}) = \sum_{w\in W}t_w\lambda_w + & \max_{P\in\Pcal} \sum_{w\in W}(\bar{p}_w-\lambda_w)\left[\sum_{(i,j)\in P}c_{ijw}\frac{\dbar_{ijw}}{\mathbf{d}({i,j})}  t_{ij} + \sum_{i\in V^{0}}c_{i w}y_{i} \right],\\
\label{casetime}
\text{s.t.} &  \frac{\mathbf{d}({i,j})}{\bar{\tau}} \leq t_{ij} \leq \frac{\munderline{d}({i,j})}{\munderline{\tau}}, &&\forall\ (i,j)\in P,\\
& y_{i} = 0, &&\forall\ i\in \{0,{n+1}\},\\
\label{casesy}
& y_{i} \ge 0, && \forall\ i \in V^{0}.
\end{align}
\label{relaxation}%
\end{subequations}

The tightest Lagrangian relaxation bound, namely the Lagrangian dual bound, can be found by optimizing over the Lagrangian vector $\boldsymbol{\lambda}\in\R{|W|}$. Optimal solutions to the Lagrangian relaxation~\eqref{relaxation} will produce cutting planes that will be essential to solving the Lagrangian dual problem. We consider a standard level bundle method \citep{Lemarechal1995}, a variant of the cutting plane method for solving the Lagrangian dual problem. Specifically, this method keeps track of a lower bound and an upper bound to the Lagrangian dual value, denoted as $LB$ and $UB$, respectively, and updates their values in an iterative fashion until they converge to the same value, which corresponds to the Lagrangian dual value. Suppose $\boldsymbol{\hat{\lambda}^L}$ is the iterate solution and suppose $f^\text{lev}_L$ is the level parameter at the $L$-th iteration of the cutting plane method, the Lagrangian dual master problem is given by:
\begin{subequations}\label{master}
\begin{align}
\min\quad &\|\boldsymbol{\lambda} -\boldsymbol{\hat{\lambda}^L}\|^2 \\
 \text{s.t.}\quad &\theta \le f^\text{lev}_L,\\
 \label{levelcut}
& \theta  \ge \sum_{w\in W} \left[t_w - \hat{\alpha}_w(\boldsymbol{\hat{\lambda}^\ell})\right]\boldsymbol{\lambda}_w + \sum_{w\in W}\pbar_w\hat{\alpha}_w(\boldsymbol{\hat{\lambda}^\ell}),\quad\quad\forall \ell = 1,2,\ldots, L,\\\
&\boldsymbol{\lambda}\le 0,
\end{align}
\label{levelbundle}%
\end{subequations}
where cut coefficient $\hat{\alpha}_w(\boldsymbol{\hat{\lambda}^\ell})$ is obtained by solving the Lagrangian relaxation \eqref{relaxation} for Lagrangian vectors $\boldsymbol{\hat{\lambda}^\ell}\in\R{|W|}, \ \ell = 1,2,\ldots, L$. Solving this Lagrangian dual master problem will produce a new iterate $\boldsymbol{\hat{\lambda}^{L+1}}$, on top of which a cut of the form \eqref{levelcut} will be generated by solving the corresponding Lagrangian relaxation problem~\eqref{relaxation} with $\boldsymbol{\hat{\lambda}^{L+1}}$, which will then be added to the master problem~\eqref{levelbundle} in the next iteration. The level parameter $f^\text{lev}_L$ is defined as $f^\text{lev}_L: = \phi LB + (1-\phi) UB$ for some $\phi\in (0,1)$. $UB$ is updated each time a Lagrangian relaxation problem~\eqref{relaxation} with respect to a new iterate is solved: $UB \leftarrow \min\{UB,f(\boldsymbol{\lambda^{\ell}})\}$, and $LB$ is updated each time where the Lagrangian master problem~\eqref{master} is infeasible, indicating that $LB$ can be updated to be identical to the level parameter $f^\text{lev}_L$. 

\paragraph{Optimal solution structures of the Lagrangian relaxation problem~\eqref{relaxation}.} We show that an optimal solution to the Lagrangian relaxation problem~\eqref{relaxation}, which will be iteratively solved within the level bundle method for computing the Lagrangian dual value, exhibits certain combinatorial structures that allow us to develop specialized solution approaches. 

\begin{proposition}\label{1wait}
There exists an optimal solution to problem~\eqref{relaxation} where a vehicle will idle at no more than one waypoint, i.e., among all waypoints $i\in V^0$, there exists at most one waypoint $\bar{v}$ where $y_{\bar{v}} > 0$.
\end{proposition}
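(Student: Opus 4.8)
The plan is to argue by an exchange/aggregation argument: starting from an arbitrary optimal solution to~\eqref{relaxation}, I would show that any idling time distributed over several waypoints can be consolidated onto a single waypoint without decreasing the objective and without violating feasibility. The first observation is that, once a path $P\in\Pcal$ is fixed, the remaining optimization over $(t_{ij})$ and $(y_i)$ is a linear program in these continuous variables; the only constraints linking the $y_i$'s are the operational deadline constraint~\eqref{operationaldeadlineconstraint} (specialized to the path: $\sum_{(i,j)\in P} t_{ij} + \sum_{i\in V^0\cap P} y_i \le T$) together with the nonnegativity constraints~\eqref{casesy}. So the total ``idling budget'' available is a single scalar $Y := T - \sum_{(i,j)\in P} t_{ij}$ (after the $t_{ij}$'s are chosen), and the objective contribution from idling is the linear functional $\sum_{i\in V^0\cap P}(\bar p_w - \lambda_w)c_{iw}y_i$, summed over $w$.

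The key step is then the standard observation that maximizing a linear functional $\sum_i g_i y_i$ over the simplex-like set $\{y\ge 0,\ \sum_i y_i \le Y\}$ has an optimal solution at a vertex, i.e., with at most one $y_i$ strictly positive — namely one achieving the largest coefficient $g_i = \sum_{w\in W}(\bar p_w - \lambda_w)c_{iw}$ (and if all coefficients are nonpositive, the all-zero solution works and trivially idles nowhere). Concretely, given an optimal $(P^*, t^*, y^*)$, let $\bar v\in V^0\cap P^*$ be a waypoint maximizing $g_i$ among the visited waypoints, and define a new solution with the same $P^*$ and $t^*$, with $y_{\bar v} := \sum_{i} y^*_i$ and $y_i := 0$ for $i\ne \bar v$. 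Feasibility with respect to~\eqref{casetime}, the $y_i=0$ depot conditions, and nonnegativity is immediate; feasibility with respect to the deadline constraint holds because $\sum_i y_i$ is unchanged. The objective does not decrease because we have only moved idling mass from coefficients $g_i$ to the maximal coefficient $g_{\bar v}\ge g_i$. Hence this modified solution is also optimal and idles at no more than one waypoint.

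One subtlety to handle carefully is that in~\eqref{relaxation} the travel times $t_{ij}$ are themselves decision variables appearing in the deadline constraint, so a priori one might worry that reallocating idling interacts with the choice of $t_{ij}$; but since we hold $P^*$ and $t^*$ fixed and only redistribute the $y$'s, this coupling is inert — the argument is entirely within the $y$-subproblem for a fixed $(P^*,t^*)$. A second point worth a sentence is the sign convention: the paper writes $\boldsymbol\lambda\le 0$ in the master problem~\eqref{levelbundle}, so the effective coefficient $\bar p_w-\lambda_w$ is nonnegative, but the argument above does not actually need any sign information about $g_i$ — it works verbatim for arbitrary real coefficients. I expect the main (minor) obstacle to be purely expository: making precise that ``fix the path and the travel times, optimize only over $y$'' is legitimate, i.e., that an optimal solution of~\eqref{relaxation} restricted in this way remains feasible and optimal, which is immediate but should be stated cleanly so the vertex argument can be invoked without ambiguity.
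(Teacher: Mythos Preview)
Your argument is correct and is essentially the same exchange/aggregation argument the paper gives: fix the optimal path and travel times, then reallocate all idling mass to a waypoint with maximal coefficient $\sum_{w\in W}(\bar p_w-\lambda_w)c_{iw}$, which preserves feasibility (the deadline budget is unchanged) and does not decrease the objective. Your write-up is in fact somewhat cleaner than the paper's, which frames the same idea as a contradiction argument and selects $\bar v$ only among the waypoints with positive idle time, but the underlying reasoning is identical.
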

\begin{proof}
Assume for contradiction that for any optimal solution to~\eqref{relaxation}, there exists a set $I$ of waypoints such that $y_i > 0, i\in I$, and $|I| \geq 2$. Let $\vbar\in I$ be such that $c_{\vbar w}\sum_{w\in W}(\bar{p}_w-\lambda_w) \ge c_{iw}\sum_{w\in W}(\bar{p}_w-\lambda_w)$ for all $i\in I\backslash\{\vbar\}$. The solution could be improved by ``reallocating'' all the idling time to waypoint $\vbar$, while the travel times $t_{ij}$ for all arcs $(i,j)$ traversed by the route corresponding to the solution are fixed. Thus, the assumed solution must not have been optimal, which yields a contradiction. In the case where there exist multiple $\vbar$, an alternative optimal solution can be obtained by idling only at one $\vbar$, which also yields a contradiction.
\end{proof}
\remark{The optimal solution structure outlined in Proposition \ref{1wait} only holds for the Lagrangian relaxation problem~\eqref{relaxation}, but not for the original model (\textup{\ref{epsilonmodel}$'$}). Consider the following example in which there are two targets where each has only one waypoint in range. Further, suppose the waypoints are located at the boundary of the coverage area so that no coverage can be obtained by traversing any arc, and also assume that each target requires a positive amount of coverage (see constraint set \eqref{nodecoverage}). In this case, to meet the coverage level a vehicle must idle at both waypoints.}

Next, we develop a specialized algorithm for solving the Lagrangian relaxation problem~\eqref{relaxation}, which exploits the aforementioned optimal solution structures. We will start with a simpler case where $T\ge \max_{P\in\Pcal}\sum_{(i,j)\in P}\frac{\mathbf{d}({i,j})}{\taulow}$, i.e., the operational deadline is irrelevant as any path is feasible under this time limit even traveling with the slowest possible speed. 

\subsection{Special Case I: A Sufficiently Large Operational Deadline}\label{sec:case1}
In this case, since the operational deadline is sufficiently large, i.e., $T\ge \max_{P\in\Pcal}\sum_{(i,j)\in P}\frac{\mathbf{d}({i,j})}{\taulow}$, given a path $P$ and the travel time on each arc $(i,j)\in P$, if the vehicle ever idles at any waypoint, it will spend $\sum_{i\in V^{0}}y_{i} = T - \sum_{(i,j)\in P} t_{ij}$ on exactly one waypoint along path $P$ according to Proposition~\ref{1wait}. In the following approach, we will loop through all candidate waypoints $\vbar\in V^0$ where a vehicle may benefit from idling (to gain a positive coverage), i.e., $\sum_{w\in W}(\bar{p}_w-\lambda_w) c_{\vbar w} > 0$, and find the best path that traverses this waypoint. We must also consider the case where the vehicle does not idle at all. After we finish the loop, the best path among all these paths is selected.  As an overview, to solve~\eqref{relaxation}, we use the following procedure: (i) For each $\vbar\in I:=\{i\in V^0 \mid \sum_{w\in W}(\bar{p}_w-\lambda_w) c_{i w} > 0\}\cup\{0\}$, find the maximal covering route $P^{*}_{\vbar}$ among all feasible routes $\Pcal_{\vbar}$ that travel through $\vbar$. We denote the case when no idling occurs as $\vbar = 0$ and define $c_{0w} := 0, \forall w\in W$. Arc weights represent the coverage obtained while traveling along each respective route segment, whose values are known upon fixing $\vbar$; 
ii) Select the optimal path (and the associated travel and idle time along the path) from $\cup_{\vbar\in I}\{P^{*}_{\vbar}\}$.

Given a fixed waypoint $\vbar$ where the vehicle idles, model~\eqref{relaxation} becomes:
\begin{subequations}
\begin{align}
\label{case1obj}
f_{\vbar}(\boldsymbol{\lambda}) = \sum_{w\in W}t_w\lambda_w + & \max_{P\in\Pcal_{\vbar}} \sum_{w\in W}(\bar{p}_w-\lambda_w)\left[\sum_{(i,j)\in P}c_{ijw}\frac{\dbar_{ijw}}{\mathbf{d}({i,j})}  t_{ij} + c_{\vbar w}y_{\vbar} \right],\\
\label{case1timelimitb}
 & \text{s.t.} \ \frac{\mathbf{d}({i,j})}{\bar{\tau}} \leq t_{ij} \leq \frac{\mathbf{d}({i,j})}{\munderline{\tau}},  &&\forall\ (i,j)\in P,\\
\label{case1yvbar}
 & y_{\vbar} \ge 0, &&\forall\ \vbar\in V^0.
\end{align}
\label{case1model}%
\end{subequations}
Since $y_{\vbar} = T - \sum_{(i,j)\in P}t_{ij}$, let $P^{*}_{\vbar}$ and $t^{*}_{ij}$ for all $(i,j)\in P^{*}_{\vbar}$ 
denote an optimal solution to model~\eqref{case1model}, we can rewrite its objective function~\eqref{case1obj} as:
\begin{align}
f_{\vbar}(\boldsymbol{\lambda}) = &\sum_{w\in W}t_w\lambda_w +\sum_{w\in W}(\bar{p}_w-\lambda_w)T c_{\vbar w} + \max_{P\in\Pcal_{\vbar}} \sum_{(i,j)\in P}\left[\sum_{w\in W}(\bar{p}_w-\lambda_w)\left(c_{ijw}\frac{\dbar_{ijw}}{\mathbf{d}({i,j})} - c_{\vbar w}\right)t_{ij}\right],\nonumber\\
 & = \sum_{w\in W}t_w\lambda_w +\sum_{w\in W}(\bar{p}_w-\lambda_w)T c_{\vbar w} + \sum_{(i,j)\in P^{*}_{\vbar}}\left[\sum_{w\in W}(\bar{p}_w-\lambda_w)\left(c_{ijw}\frac{\dbar_{ijw}}{\mathbf{d}({i,j})} - c_{\vbar w}\right)t^{*}_{ij\bar{v}} \right],\nonumber\\
& =  \sum_{w\in W}t_w\lambda_w +\sum_{w\in W}(\bar{p}_w-\lambda_w)\underbrace{\left[T c_{\vbar w} + \sum_{(i,j)\in P^{*}_{\vbar}}\left(c_{ijw}\frac{\dbar_{ijw}}{\mathbf{d}({i,j})} - c_{\vbar w}\right)t^{*}_{ij\bar{v}}\right]}_{\hat{\alpha}_w(\boldsymbol{\lambda})},\nonumber\\
& = \sum_{w\in W} \left[t_w - \hat{\alpha}_w(\boldsymbol{\lambda})\right]\lambda_w + \sum_{w\in W}\pbar_w\hat{\alpha}_w(\boldsymbol{\lambda}),
\end{align}
where given $\bar{v}\in I$, the optimal travel time $t^{*}_{ij\bar{v}}$ along each arc $(i,j)\in P^{*}_{\vbar}$ corresponds to either its lower or upper bound. Specifically, for each $(i,j)\in P^{*}_{\vbar}$:
\bitemize
\item[1)] If $\sum_{w\in W}(\bar{p}_w-\lambda_w) \left(\frac{\dbar_{ijw}}{\mathbf{d}({i,j})}   c_{ijw} - c_{\vbar w}\right) > 0$, then $t^*_{ij\bar{v}} = \frac{\mathbf{d}({i,j})}{\taulow}$;
\item[2)] If $\sum_{w\in W}(\bar{p}_w-\lambda_w) \left(\frac{\dbar_{ijw}}{\mathbf{d}({i,j})}   c_{ijw} - c_{\vbar w}\right) \leq 0$, then $t^*_{ij\bar{v}} = \frac{\mathbf{d}({i,j})}{\taubar}$.
\eitemize

Thus, we see that given a fixed waypoint $\vbar$ for which to idle, the speed at which an arc will be traversed immediately
follows, regardless of the choice of path $P_{\vbar}$. Therefore, for each arc $(i,j)\in A$, we can compute the optimal \emph{adjusted coverage} as:
\[
c^*_{(i,j),\bar{v}} := \sum_{w\in W}(\bar{p}_w-\lambda_w) \left(\frac{\dbar_{ijw}}{\mathbf{d}({i,j})}   c_{ijw} - c_{\vbar w}\right)t^*_{ij\bar{v}}.
\]  
This simple observation allows us to perform path/route optimization using a pre-optimized travel time and the associated adjusted coverage for each arc $(i,j)\in A$ as defined above, so that the label correcting algorithm for standard VRP (see, e.g.,~\cite{Feillet2004}) can be applied to solve model~\eqref{case1model} with some simple extensions elaborated below. We remark that this is not the case when the operational deadline $T$ is not sufficiently large.

\subsubsection{The labeling algorithm}\label{sec:lagrangiandual}
Algorithm \ref{labelalg} is an extension of the label correcting algorithm for the VRPTW \citep{Feillet2004}. Specifically, each label $L$ represents a partial path $P$ (i.e., $L=(P)$) starting from $0$ and ending at some node $v\in V^0$. We define $M(L)$ as the set of waypoints visited so far by $L$. For any fixed $\vbar\in I$, we define $c_{\bar{v}}(L)$ as the \emph{cumulative adjusted coverage} associated with $L$, given by $c_{\bar{v}}(L) = \sum_{(i,j)\in P} c^*_{(i,j),\bar{v}}$.

An optimal complete path (starting from node $0$ and ending at node $n+1$) is obtained by performing partial enumeration of labels (e.g., partial paths) via a dynamic programming algorithm that iteratively generates labels by extending existing labels to new waypoints. As the operational deadline is non-binding for any path, a label can be extended to visit any waypoint not already visited.

\begin{algorithm}[htbp]
\caption{A labeling algorithm for solving model~\eqref{case1model} with a given $\bar{v}\in I$.}\label{labelalg}
\begin{algorithmic}[1]
\Statex \underline{\bf Label Initialization} A list of labels $\Lcal(i)$ is stored for each waypoint $i\in V^0$. $\Lcal(i)\leftarrow\emptyset$ and $\delta\leftarrow 1$. Let $c^*_{(i,j),\bar{v}}$ be the optimal adjusted coverage associated with arc $(i,j)\in A$. 
\For {$i\in V^0$}
\State Initialize labels $L^i = ((0,i))$ and $c_{\bar{v}}(L^i) = c^*_{(0,i),\bar{v}}$.  
Add $L^i$ to $\Lcal(i)$.
\EndFor
\Statex \underline{\bf Label Extension}.
\While {$\delta \le |V^0|$}
\For {$j\in V^0$}
\For {$L=(P)$ from $\Lcal(j)$ where $|M(L)|= \delta+1$}
\For {$i\in V^0$}
\If {$i\notin M(L)$}
\parState {Create a label $L^n = (P \bigoplus (j,i))$ (where $P \bigoplus (j,i)$ denotes that arc (j,i) is appended to path $P$) and $c_{\bar{v}}(L^n) =c_{\bar{v}}(L) + c^*_{(j,i),\bar{v}}$.}
\If {$L^n$ is not dominated}
\State {Add $L^n$ to $\Lcal(i)$.}
\For {$L\in \Lcal(i)$}
\State {If $L^n$ dominates $L$, then remove $L$ from $\Lcal(i)$.}
\EndFor
\EndIf
\EndIf
\EndFor
\EndFor
\EndFor
\State {$\delta \leftarrow \delta +1$}
\EndWhile
\Statex \underline{\bf Label Termination}.
\For {$i\in V^0$}
\For {$L=(P)$ from $\Lcal(i)$}
\State Extend label $L$ to $n+1$ and calculate the total cumulative adjusted coverage associated with $L$.
\EndFor
\State {Output label $L$ from $\Lcal(i)$ with the largest cumulative adjusted coverage $c_{\bar{v}}(L)$.}
\EndFor
\end{algorithmic}
\end{algorithm}

An effective dominance rule is critical to the success of the labeling algorithm, which allows many partial paths to be dominated (and thus be removed) from the dynamic programming procedure. We next describe the dominance rule that we develop for special case I.

\begin{proposition}{[Dominance rule for special case I]}\label{case1dominance}
We say that a label $L^2$ is dominated by another label $L^1$, if: (i) $L^1$ and $L^2$ end at the same waypoint; (ii) $M(L^1)\subseteq M(L^2)$; and further:
\begin{itemize}
\item[1.] If $\vbar\in M(L^1)$ and $\vbar\in M(L^2)$ or if $\vbar\notin M(L^1)$ and $\vbar\notin M(L^2)$, then $L^1$ dominates $L^2$ if $c_{\bar{v}}(L^1) \geq c_{\bar{v}}(L^2)$, 
\item[2.] If $\vbar\notin M(L^1)$ but $\vbar\in M(L^2)$, and $\vbar\in V^0$, we compute the worst case extra cost incurred by adjusting a direct application of an extension $L'$ of $L^2$ to $L^1$ in order to accomodate the visit to $\vbar$. This extra cost can be computed as follows:
\begin{equation}\label{vbarextension}
c^{extra}_{\bar{v}}(L^2) = \min_{\substack{i\in V^0:\\ i\notin M(L^2)}}\Big\{c^*_{(i,\bar{v}),\bar{v}} + c^*_{(\bar{v},n+1),\bar{v}} - c^*_{(i,n+1),\bar{v}}\Big\}.
\end{equation}
If $c_{\bar{v}}(L^1)+c^{extra}_{\bar{v}}(L^2) \ge c_{\bar{v}}(L^2)$, then $L^1$ dominates $L^2$.

\end{itemize}
Note that it is not possible for $\vbar\in M(L^1)$ but $\vbar\notin M(L^2)$ since condition (ii) must be satisfied in order for $L^1$ to dominate $L^2$. We define $c^{extra}_{0}(L^2) := 0$.
\end{proposition}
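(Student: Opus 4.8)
The plan is to verify that the stated relation is a valid dominance rule for the labeling algorithm, i.e., that deleting $L^2$ whenever some $L^1$ dominates it never removes an optimal complete path for model~\eqref{case1model} with $\bar{v}$ fixed. Since, by the derivation preceding the algorithm, the objective of~\eqref{case1model} (for fixed $\bar{v}$) equals a constant plus the cumulative adjusted coverage $c_{\bar{v}}(P)=\sum_{(i,j)\in P}c^*_{(i,j),\bar{v}}$ of the chosen path $P\in\Pcal_{\bar{v}}$, it suffices to show: for every feasible completion of $L^2$ to a path $Q_2\in\Pcal_{\bar{v}}$ (a simple path from $0$ to $n+1$ that visits $\bar{v}$ when $\bar{v}\in V^0$), there is a feasible completion of $L^1$ to some $Q_1\in\Pcal_{\bar{v}}$ with $c_{\bar{v}}(Q_1)\ge c_{\bar{v}}(Q_2)$. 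Write $Q_2=L^2\oplus E$, where $E$ is the arc sequence from the common endpoint $v$ of $L^1,L^2$ (condition~(i)) to $n+1$; the waypoints $E$ visits beyond $v$ avoid $M(L^2)$, hence avoid $M(L^1)$ as well, by condition~(ii). These two transfer facts are what I would record first.

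I would then dispatch the easy configurations (bullet~1) and $\bar{v}=0$. If $\bar{v}=0$, there is no forced visit and $c_{0w}\equiv 0$, $c^{extra}_0\equiv 0$, so the argument collapses to the inequality below with $Q_1:=L^1\oplus E$. If $\bar{v}\in M(L^1)\cap M(L^2)$, then $\bar{v}$ is already served by both prefixes, so $Q_1:=L^1\oplus E\in\Pcal_{\bar{v}}$ is feasible and $c_{\bar{v}}(Q_1)-c_{\bar{v}}(Q_2)=c_{\bar{v}}(L^1)-c_{\bar{v}}(L^2)\ge 0$ by hypothesis. If $\bar{v}\notin M(L^1)\cup M(L^2)$, then $Q_2\in\Pcal_{\bar{v}}$ forces $E$ to visit $\bar{v}$; since also $\bar{v}\notin M(L^1)$, the same $E$ is a feasible extension of $L^1$, $Q_1:=L^1\oplus E\in\Pcal_{\bar{v}}$, and the identical inequality applies. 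This settles bullet~1.

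The substance is bullet~2, where $\bar{v}\in M(L^2)\setminus M(L^1)$ with $\bar{v}\in V^0$, and this is the step I expect to be the main obstacle: now $E$ avoids $\bar{v}$ (it is already in $M(L^2)$), so $E$ cannot be reused verbatim for $L^1$, which still owes a visit to $\bar{v}$. The idea is to repair $E$ as cheaply as the rule permits: if $E$ ends with the arc $(i,n+1)$, replace that arc by the pair $(i,\bar{v}),(\bar{v},n+1)$ to form $E'$, and set $Q_1:=L^1\oplus E'$. I would check feasibility directly: the waypoints of $E$ beyond $v$ lie outside $M(L^2)\supseteq M(L^1)$, $\bar{v}\notin M(L^1)$, and $\bar{v}$ does not already appear in $E$, so no waypoint is repeated and $Q_1\in\Pcal_{\bar{v}}$. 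The cumulative adjusted coverage changes by exactly $c^*_{(i,\bar{v}),\bar{v}}+c^*_{(\bar{v},n+1),\bar{v}}-c^*_{(i,n+1),\bar{v}}$, which is at least $c^{extra}_{\bar{v}}(L^2)$ from~\eqref{vbarextension} because the predecessor $i$ of $n+1$ in $E$ is an admissible index in that minimum. Therefore
\[
c_{\bar{v}}(Q_1)\;\ge\;c_{\bar{v}}(L^1)+c_{\bar{v}}(E)+c^{extra}_{\bar{v}}(L^2)\;=\;\bigl(c_{\bar{v}}(L^1)+c^{extra}_{\bar{v}}(L^2)\bigr)+c_{\bar{v}}(E)\;\ge\;c_{\bar{v}}(L^2)+c_{\bar{v}}(E)\;=\;c_{\bar{v}}(Q_2),
\]
using the dominance hypothesis $c_{\bar{v}}(L^1)+c^{extra}_{\bar{v}}(L^2)\ge c_{\bar{v}}(L^2)$, which is exactly what we want.

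Two points deserve care while writing this out. First, one must confirm that the predecessor $i$ of $n+1$ in $E$ really ranges over (a subset of) the index set $\{i\in V^0:i\notin M(L^2)\}$ of the minimum in~\eqref{vbarextension}: the generic case $i\notin M(L^2)$ is immediate, but the corner case $E=(v,n+1)$ forces $i=v\in M(L^1)$, so one should either treat this configuration separately or observe that the minimum in~\eqref{vbarextension} may be taken over $i=v$ at no cost to the bound. Second, one uses $\bar{v}\ne v$ — which holds since $\bar{v}\in M(L^2)\setminus M(L^1)$ while $v\in M(L^1)$ — so that inserting $\bar{v}$ before $n+1$ does not duplicate the endpoint. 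Finally, as the proposition already notes, condition~(ii) rules out $\bar{v}\in M(L^1)$ with $\bar{v}\notin M(L^2)$, so the configurations in bullets~1 and~2 (together with $\bar{v}=0$) are exhaustive, and the argument is complete.
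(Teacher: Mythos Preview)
Your proposal is correct and follows the same argument as the paper: in bullet~1 reuse $L^2$'s extension verbatim for $L^1$, and in bullet~2 detour through $\bar{v}$ immediately before $n+1$ and bound the resulting change in cumulative adjusted coverage from below by the minimum in~\eqref{vbarextension}. You are in fact more careful than the paper in flagging the corner case $E=(v,n+1)$, where the predecessor $i=v$ lies in $M(L^2)$ and hence outside the index set of~\eqref{vbarextension}; the paper's proof silently assumes the last node before $n+1$ satisfies $i\notin M(L^2)$ and does not address this configuration.
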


\begin{proof}$ $\newline
\vspace{-1cm}
\begin{itemize}
\item[1.] In this case, any feasible extension $L'$ of $L^2$ to a complete path is also a feasible extension of $L^1$ to a complete path. Also, the cumulative adjusted coverage of the complete path corresponding to $L^1$, $c_{\bar{v}}(L^1\bigoplus L')$, is higher than that of $L^2$, $c_{\bar{v}}(L^2 \bigoplus L')$, since $c_{\bar{v}}(L^1) \geq c_{\bar{v}}(L^2)$ and the extensions for $L^1$ and $L^2$ are identical.

\item[2.] In this case, $L^1$ is not yet a feasible path (since it has not visited $\bar{v}$ yet) and so the argument for case 1 cannot be made. According to conditions (i) and (ii), any feasible extension $L'$ of $L^2$ is also a feasible extension of $L^1$. Suppose that node $i\in V^0, i\notin M(L^2)$ is the last node visited by $L^2$ prior to $n+1$ for this feasible extension $L'$. When we apply the same extension to $L^1$, since $L^1$ has not visited $\bar{v}$ yet, we will have to use arcs $(i,\vbar)$ and $(\vbar,n+1)$ in the extension for $L^1$ instead of arc $(i,n+1)$. As we do not know which node $i\in V^0, i\notin M(L^2)$ is the last node visited by $L^2$ prior to $n+1$, we must consider the worst-case scenario in order to be sufficient for $L^1$ to dominate $L^2$. This worst case corresponds to the expression shown in~\eqref{vbarextension}: if the cumulative adjusted coverage of $L^1$ is greater than or equal to that of $L^2$ in this worst case, then we can conclude that $L^2$ is dominated by $L^1$.
\end{itemize}
\end{proof}

\subsection{Special Case II: A Restrictive Operational Deadline}\label{sec:case2}
We now consider the case where the operational time restriction can limit the path choices. In this case, there exist paths with traversal times potentially longer than the operational time limit, i.e., $T < \max_{P\in\Pcal}\sum_{(i,j)\in P}\frac{\mathbf{d}({i,j})}{\taulow}$. Similar to special case I, we have:
\begin{equation}
f(\boldsymbol{\lambda}) = \sum_{w\in W}t_w\lambda_w +\max 
\resizebox{.75\textwidth}{!}{$\displaystyle
\begin{cases}
\begin{aligned}[c]
\max_{P\in\Pcal} &\sum_{(i,j)\in P}\left[\sum_{w\in W}(\bar{p}_w-\lambda_w)c_{ijw}\frac{\dbar_{ijw}}{\mathbf{d}({i,j})} \right] t_{ij} ,\\
\text{s.t.}&\sum_{(i,j)\in P}t_{ij}  \le T,\\
\phantom{\text{s.t.}}&\frac{\mathbf{d}({i,j})}{\bar{\tau}} \leq t_{ij} \leq \frac{\mathbf{d}({i,j})}{\munderline{\tau}},  \quad\quad\forall\ (i,j)\in P,\\
\end{aligned}&(a)\\
\begin{aligned}[c]
\max_{\vbar\in V^{0}} &\left\{\sum_{w\in W}(\bar{p}_w-\lambda_w)Tc_{\vbar w} +\max_{P\in\Pcal_{\vbar}}\sum_{(i,j)\in P} \left[\sum_{w\in W}(\bar{p}_w-\lambda_w)\left(c_{ijw}\frac{\dbar_{ijw}}{\mathbf{d}({i,j})} - c_{\vbar w}\right)\right]t_{ij}\right\} ,\\
\text{s.t.}&\sum_{(i,j)\in P}t_{ij}  \le T,\\
\phantom{\text{s.t.}}&\frac{\mathbf{d}({i,j})}{\bar{\tau}} \leq t_{ij} \leq \frac{\mathbf{d}({i,j})}{\munderline{\tau}},  \quad\quad\forall\ (i,j)\in P.\\
\end{aligned}&(b)
\end{cases}
$}
\label{case2model}
\end{equation}

In either case (a) or (b), for any fixed path $P\in \Pcal$ or $P\in \Pcal_{\vbar}$, the underlying problem is a \emph{continuous knapsack problem}. Therefore, in an optimal solution, all decision variables will be set to either their lower or upper bounds, save for at most one~\citep{Dantzig1957}. Moreover, the assignment of different values to these variables depends on the ranking of their respective coverage value per unit distance. For notational convenience, we denote $f_{(i,j),\bar{v}} := \sum_{w\in W}(\bar{p}_w-\lambda_w)\left(c_{ijw}\frac{\dbar_{ijw}}{\mathbf{d}({i,j})} - c_{\vbar w}\right)$ for any fixed $\vbar\in I$, for each $(i,j)\in A$.

\begin{proposition}{[Optimal travel time structures on a fixed path]}\label{continuous-knapsack}
Given a fixed $\vbar\in I$, let $P\in \Pcal_{\vbar}$ be a fixed path that traverses $\vbar$, then there is an optimal solution $\{t^*_{ij}\}_{(i,j)\in P}$ to the following optimal timing problem:
\begin{subequations}
\begin{align}
\max \ & \sum_{(i,j)\in P} f_{(i,j),\bar{v}}t_{ij} \\
\text{s.t.} & \sum_{(i,j)\in P}t_{ij}  \le T \\
 & \frac{\mathbf{d}({i,j})}{\bar{\tau}} \leq t_{ij} \leq \frac{\mathbf{d}({i,j})}{\munderline{\tau}},  \quad\quad\forall\ (i,j)\in P,
\end{align}
\end{subequations}
such that $\exists a^*\in P$ where $t^*_{a^*} \in [\frac{\mathbf{d}(a^*)}{\bar{\tau}},\frac{\mathbf{d}(a^*)}{\munderline{\tau}}]$, and:
\begin{itemize}
\item For all $(i,j)\in P, (i,j)\neq a^*$ such that $\frac{f_{(i,j),\bar{v}}}{\mathbf{d}(i,j)} \geq \frac{f_{a^*,\bar{v}}}{\mathbf{d}(a^*)}$, $t^*_{ij} = \frac{\mathbf{d}({i,j})}{\munderline{\tau}}$.
\item For all $(i,j)\in P, (i,j)\neq a^*$ such that $\frac{f_{(i,j),\bar{v}}}{\mathbf{d}(i,j)} \leq \frac{f_{a^*,\bar{v}}}{\mathbf{d}(a^*)}$, $t^*_{ij} = \frac{\mathbf{d}({i,j})}{\bar{\tau}}$.
\end{itemize}
\end{proposition}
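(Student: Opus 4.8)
The plan is to recognize the optimal timing problem, with the path $P$ and the idling waypoint $\vbar$ held fixed, as a bounded-variable linear program in the $|P|$ variables $\{t_{ij}\}_{(i,j)\in P}$: a single coupling ``budget'' constraint $\sum_{(i,j)\in P}t_{ij}\le T$ sitting on top of the per-arc box constraints $t_{ij}\in[\mathbf{d}(i,j)/\taubar,\mathbf{d}(i,j)/\taulow]$, with linear objective $\sum_{(i,j)\in P}f_{(i,j),\vbar}\,t_{ij}$ --- that is, a continuous knapsack. The first step of the proof is to invoke the classical fact already cited in this section~\citep{Dantzig1957}: such a program admits an optimal basic feasible solution in which at most one variable lies strictly between its two bounds, precisely because only one constraint couples the variables. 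Fix one such optimal solution and let $a^*$ be the arc of that (possibly non-existent) interior variable, taking $a^*$ to be an arbitrary arc when every $t^*_{ij}$ is at a bound. This already yields the first clause of the statement, $t^*_{a^*}\in[\mathbf{d}(a^*)/\taubar,\mathbf{d}(a^*)/\taulow]$.

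The core of the proof is an exchange argument fixing which bound each remaining arc takes. Suppose for contradiction that some $(i,j)\ne a^*$ that the statement places at its upper bound --- one with $f_{(i,j),\vbar}/\mathbf{d}(i,j)\ge f_{a^*,\vbar}/\mathbf{d}(a^*)$ --- were instead at its lower bound in the fixed optimal solution. Since $t^*_{a^*}$ is strictly above its lower bound, I may transfer a small amount $\varepsilon>0$ of the time budget from $a^*$ to $(i,j)$ --- decrease $t^*_{a^*}$ by $\varepsilon$, increase $t^*_{ij}$ by $\varepsilon$ --- remaining feasible for $\varepsilon$ small, with the objective changing by a nonnegative amount governed by comparing the two arcs' per-unit-time coverage contributions. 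This either strictly improves the objective, contradicting optimality, or leaves it unchanged and produces another optimal solution in which the offending arc has moved toward (and, after finitely many such steps, onto) its upper bound; the mirror-image transfer handles an arc that the statement places at its lower bound but which is found at its upper bound. Iterating these transfers terminates --- each drives one variable onto a bound while $a^*$ stays the only candidate to be interior --- and produces an optimal solution of exactly the asserted form. The weak inequalities in the two bullet points are what allow arcs tied with $a^*$ to sit at either bound, so nothing is lost there.

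I expect the main obstacle to be careful bookkeeping rather than any conceptual difficulty: one must preserve the ``at most one interior variable'' invariant throughout, performing every transfer between $a^*$ and one other arc and re-designating $a^*$ whenever a transfer drives $t_{a^*}$ onto a bound, and one must track the sign of $f_{(i,j),\vbar}$, which need not be positive once the $-c_{\vbar w}$ terms are present --- a negative coefficient means the arc prefers its fast (small-$t$) bound, which is consistent with the ordering because in any optimal solution with the budget constraint tight the interior arc $a^*$ must itself have a nonnegative coefficient (equal to that constraint's shadow price). I would close, as in the derivation for Special Case~I, by noting that fixing $\vbar$ together with this threshold arc pins down the optimal speed on every arc of $P$, which is exactly the structure the Special Case~II labeling procedure will exploit to pre-compute adjusted arc coverages.
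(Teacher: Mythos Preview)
The paper does not actually supply a proof of this proposition: it is stated immediately after the sentence noting that the inner problem is a continuous knapsack and citing \citet{Dantzig1957}, and the text then moves on to the labeling algorithm. Your plan---recognize the problem as a bounded-variable LP with a single coupling budget constraint, invoke the Dantzig result that at most one variable is strictly interior, then use an exchange argument to pin down which bound each remaining arc takes---is therefore already more detailed than what the paper offers, and is the same approach in spirit.

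There is, however, a mismatch between your exchange step and the threshold actually asserted in the statement. When you shift $\varepsilon$ units of \emph{time} from $a^*$ to $(i,j)$, the objective moves by $\varepsilon\bigl(f_{(i,j),\vbar}-f_{a^*,\vbar}\bigr)$, because in the budget constraint $\sum_{(i,j)\in P} t_{ij}\le T$ every $t_{ij}$ carries unit weight. Your exchange therefore proves that arcs with larger $f_{(i,j),\vbar}$ sit at their upper bound and arcs with smaller $f_{(i,j),\vbar}$ sit at their lower bound, which is precisely the standard continuous-knapsack ordering. The proposition, though, states the threshold in terms of the ratio $f_{(i,j),\vbar}/\mathbf{d}(i,j)$, and that is also the quantity the paper tracks later as $f_{\min,\vbar}(L)$ and $f_{\max,\vbar}(L)$. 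These two orderings do not coincide in general, so your argument as written does not establish the proposition as stated; it establishes the version with the $\mathbf{d}(i,j)$ denominators removed. Since the paper gives no proof, it offers no mechanism that would produce the extra $1/\mathbf{d}(i,j)$ factor either---so this looks like an inconsistency in the statement rather than a flaw in your reasoning, but you should flag it explicitly rather than let the exchange argument silently prove a different claim.
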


We will exploit special optimal solution structures shown in Proposition~\ref{continuous-knapsack} to develop a novel labeling algorithm to solve~\eqref{case2model}. In particular, we introduce the concept of a \emph{token}, which can be used to \emph{allow} the time spent on traversing an arc $(i,j)\in P$ to be strictly between their respective upper and lower bounds. According to Proposition~\ref{continuous-knapsack}, for each route we are allowed to use at most one token, and except for the arc where the token is applied, all other arcs will be traversed with either the minimum or the maximum possible travel time. Of course, the challenge in solving the optimal timing problem together with optimal path selection as in~\eqref{case2model} is that, before a route is completed, it is unclear whether or not the token will be applied to any arc, and if so, where the token will be applied. However, as indicated by Proposition~\ref{continuous-knapsack}, the place where the token will be applied is restricted by arcs that have been traversed by a label (partial path). Specifically, given a label $L$ and an arc $(i,j)$ on which we perform a label extension, we need to consider up to two different possible label extensions from $L = (P)$ to $(P \bigoplus (i,j))$:
\bitemize
\item[1.] If $f_{(i,j),\bar{v}} \leq 0$, then we must extend the label by traversing arc $(i,j)$ with the minimum time. 
\item[2.] If $f_{(i,j),\bar{v}} > 0$, i.e., traversing arc $(i,j)$ with the maximum time is optimal, then we may extend the label by traversing arc $(i,j)$ with either the minimum time or the maximum time. We will refer to these arcs as the \emph{tradeoff} arcs.
\eitemize
We denote the set of tradeoff arcs associated with a label $L$ as the \emph{tradeoff set} $\Scal(L)$. It is clear that we will only use a token on a tradeoff arc. For each tradeoff arc in $\Scal(L)$, we compute its corresponding ratio $\frac{f_{(i,j),\bar{v}}}{\mathbf{d}(i,j)}$. Given a label (partial path) $L$, we keep track of the minimum ratio $f_{\min,\vbar}(L)$ among all tradeoff arcs within $\Scal(L)$ that have been traveled with the maximum time, and the maximum ratio $f_{\max,\vbar}(L)$ among all tradeoff arcs within $\Scal(L)$ that have been traveled with the minimum time (see Figure \ref{fig:ratio}).

\begin{figure}[htbp]
\centering
 \includegraphics[width=0.35\textwidth]{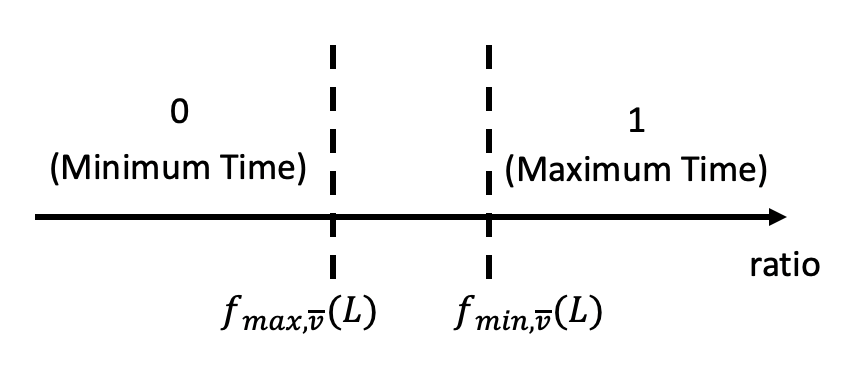}
\caption{Ordering of ratios in tradeoff set}
 \label{fig:ratio}
\end{figure}
Specifically, we label a tradeoff arc $(i,j)\in \Scal(L)$ with label $u_{(i,j)} = 1$ if its travel time is set to its maximum possible travel time, and $u_{(i,j)} = 0$ if its travel time is set to its minimum. When a label $L$ is to be extended with a tradeoff arc $(i,j)$, we first compute its corresponding ratio $\frac{f_{(i,j),\bar{v}}}{\mathbf{d}(i,j)}$, and then we compare it with the minimum and maximum ratio that we keep track of for label $L$ to decide which label ($0$ or $1$) will be assigned to arc $(i,j)$.
\begin{itemize}
\item If $\frac{f_{(i,j),\bar{v}}}{\mathbf{d}(i,j)} \ge f_{\min,\vbar}(L) := \min\left\{\frac{f_{(i',j'),\bar{v}}}{\mathbf{d}(i',j')} \mid (i',j') \in \Scal(L)\ \text{such that}\ u_{(i',j')} = 1\right\}$, then we will label arc $(i,j)$ with $u_{(i,j)} = 1$ and extend label $L$ with maximum travel time on arc $(i,j)$.

\item If $\frac{f_{(i,j),\bar{v}}}{\mathbf{d}(i,j)} \le f_{\max,\vbar}(L) := \max\left\{\frac{f_{(i',j'),\bar{v}}}{\mathbf{d}(i',j')} \mid (i',j') \in \Scal(L)\ \text{such that}\ u_{(i',j')} = 0\right\}$, then we will label arc $(i,j)$ with $u_{(i,j)} = 0$ and extend label $L$ with minimum travel time on arc $(i,j)$.

\item If neither of the above two cases occurs, we will create two new labels where one is extended with the maximum travel time and the other is extended with the minimum travel time. In this case, $f_{\min,\vbar}(L)$ and $f_{\max,\vbar}(L)$ after the extension will be updated accordingly.
\end{itemize}

Note that the feasibility of each label extension is verified before we execute the extension. For any fixed $\vbar\in I$, let $t_{\vbar}(L)$ be the arrival time at the last node visited by $L$, given by 
\[
t_{\bar{v}}(L) = \sum_{(i,j)\in P, (i,j)\notin \Scal(L)} \frac{\mathbf{d}({i,j})}{\bar{\tau}} + \sum_{(i,j)\in P, (i,j)\in \Scal(L)}\left(\frac{\mathbf{d}({i,j})}{\bar{\tau}}(1-u_{(i,j)}) + \frac{\mathbf{d}({i,j})}{\munderline{\tau}}u_{(i,j)}\right). 
\]
We will check:
\begin{itemize}
\item If $\vbar\in M(L)$, a label $L$ ending at waypoint $i\in V^0$ can be extended to $j\in V^0$, where $j\notin M(L)$, only if $t_{\vbar}(L) + \frac{\mathbf{d}({i,j})}{\bar{\tau}}(1-u_{(i,j)}) + \frac{\mathbf{d}({i,j})}{\munderline{\tau}}u_{(i,j)} + \frac{\mathbf{d}(j,n+1)}{\bar{\tau}} \le T$, i.e., a feasible complete path is possible if using the maximum speed on the last leg of the path, $(j, n+1)$.  
\item If $\vbar\notin M(L)$, a label $L$ ending at waypoint $i\in V^0$ can be extended to $j\in V^0$, where $j\notin M(L)$, only if $t_{\vbar}(L) + \frac{\mathbf{d}({i,j})}{\bar{\tau}}(1-u_{(i,j)}) + \frac{\mathbf{d}({i,j})}{\munderline{\tau}}u_{(i,j)} + \frac{\mathbf{d}(j,\bar{v})}{\bar{\tau}} + \frac{\mathbf{d}(\bar{v},n+1)}{\bar{\tau}} \le T$, i.e., a feasible complete path is possible if using the maximum speed on the last two legs of the path, $(j, \bar{v})$ and $(\bar{v}, n+1)$.
\end{itemize}

Except for the possibility of creating up to two new labels per label extension, the labeling algorithm for special case II is identical to Algorithm~\ref{labelalg}. Together with the cumulative adjusted coverage $c_{\vbar}(L)$, the arrival time $t_{\vbar}(L)$, we also store the maximum cumulative adjusted coverage $c^{\max}_{\vbar}(L)$, the minimum cumulative adjusted coverage $c^{\min}_{\vbar}(L)$, and their corresponding arrival times $t^{\max}_{\vbar}(L)$ and $t^{\min}_{\vbar}(L)$, respectively, by applying the token on either the arc corresponding to $f_{\min,\vbar}(L)$ or the arc corresponding to $f_{\max,\vbar}(L)$. We next present the dominance rules that we develop for the labeling algorithm for special case II.

\begin{proposition}{[Dominance Rule for Special Case II]}\label{case2dominance}
We say that a label $L^2$ is dominated by another label $L^1$, if: (i) $L^1$ and $L^2$ end at the same waypoint; (ii) $M(L^1)\subseteq M(L^2)$; (iii) $c_{\vbar}(L^1) \ge c_{\vbar}(L^2)$, $t_{\vbar}(L^1) \le t_{\vbar}(L^2)$, and at least one of these holds strictly, and furthermore, one of the following four cases will hold:
\begin{itemize}
\item Case 1: $c_{\bar{v}}(L^1) \geq c^{\max}_{\bar{v}}(L^2)$, and $t_{\bar{v}}(L^1) \leq t^{\min}_{\bar{v}}(L^2)$;
\item Case 2: If $c_{\bar{v}}(L^1) \leq c^{\max}_{\bar{v}}(L^2)$, and $t_{\bar{v}}(L^1) \leq t_{\bar{v}}^{\min}(L^2)$, then $t_{\bar{v}}(L^1) + \left(c_{\bar{v}}^{\max}(L^2)-c_{\bar{v}}(L^1)\right)/f_{\min,\bar{v}}(L) \leq t^{\max}_{\bar{v}}(L^2)$ must hold;
\item Case 3: If $c_{\bar{v}}(L^1) \geq c^{\max}_{\bar{v}}(L^2)$, and $t_{\bar{v}}(L^1) \geq t^{\min}_{\bar{v}}(L^2)$, then $c_{\bar{v}}(L^1) - f_{\max,\bar{v}}(L)\cdot \left(t_{\bar{v}}(L^1) - t^{\min}_{\bar{v}}(L^2)\right) \geq c^{\min}_{\bar{v}}(L^2)$ must hold;
\item Case 4: If $c_{\bar{v}}(L^1) \leq c^{\max}_{\bar{v}}(L^2)$, and $t_{\bar{v}}(L^1) \geq t^{\min}_{\bar{v}}(L^2)$,  then both $t_{\bar{v}}(L^1) + \left(c^{\max}_{\bar{v}}(L^2)-c_{\bar{v}}(L^1)\right)/f_{\min,\bar{v}}(L) \leq t^{\max}_{\bar{v}}(L^2)$ and $c_{\bar{v}}(L^1) - f_{\max,\bar{v}}(L)\cdot \left(t_{\bar{v}}(L^1) - t^{\min}_{\bar{v}}(L^2)\right) \geq c^{\min}_{\bar{v}}(L^2)$ must hold.
\end{itemize}

\end{proposition}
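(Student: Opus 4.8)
The plan is to prove the rule from its operational content: if some complete, time-feasible route of the form $L^2\bigoplus L'$ exists --- where $L'$ routes the final waypoint of $L^2$ to $n+1$ through fresh waypoints and the single available token is placed on exactly one arc of the route (or not used at all) --- then there is a complete, time-feasible route extending $L^1$ whose total cumulative adjusted coverage is at least as large. Conditions (i) and (ii) guarantee that $L^1$ and $L^2$ end at the same waypoint with $M(L^1)\subseteq M(L^2)$, so any such suffix $L'$ is also a legal extension of $L^1$, contributing the same travel time and the same adjusted coverage in both completions. I would first dispose of the subcase $\vbar\in M(L^2)\setminus M(L^1)$ exactly as in the proof of Proposition~\ref{case1dominance}: there $L^1$'s completion must additionally route through $\vbar$, so it is charged the worst-case rerouting cost \eqref{vbarextension}; in the remainder I assume $\vbar$ has the same membership status in $M(L^1)$ and $M(L^2)$, and the algorithm's look-ahead feasibility checks (which reserve travel time on the legs $(j,\vbar)$ and $(\vbar,n+1)$ when $\vbar\notin M(L)$) ensure the completions considered below are genuinely feasible.

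Next I would split on where the token is used in the given completion of $L^2$. If it lies on the suffix $L'$ (or is unused), then reusing the same suffix with the same token placement on $L^1$ already works: by (iii) the $L^1$-prefix has weakly larger coverage and weakly smaller arrival time than the $L^2$-prefix, so the route is time-feasible and at least as good. The only substantive case is when the token lies on $L^2$'s already-traversed prefix. By Proposition~\ref{continuous-knapsack}, moving the token-bearing arc between its bounds traces a concave, piecewise-linear token frontier $\mathcal F_2$ in the (arrival time, cumulative adjusted coverage) plane, passing through the breakpoints $(t^{\min}_{\vbar}(L^2),c^{\min}_{\vbar}(L^2))$, $(t_{\vbar}(L^2),c_{\vbar}(L^2))$, $(t^{\max}_{\vbar}(L^2),c^{\max}_{\vbar}(L^2))$, with its two segment slopes governed by $f_{\min,\vbar}$ and $f_{\max,\vbar}$; an analogous frontier $\mathcal F_1$ describes $L^1$. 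Because the operational deadline $T$ is the sole constraint coupling prefix and suffix, and the suffix's time and coverage are fixed, it suffices to show that $\mathcal F_1$ Pareto-dominates $\mathcal F_2$: for every $(\tilde t,\tilde c)\in\mathcal F_2$ there is $(\hat t,\hat c)\in\mathcal F_1$ with $\hat t\le\tilde t$ and $\hat c\ge\tilde c$. Given such a point, placing $L^1$'s token to realize $(\hat t,\hat c)$ and appending the same (now token-free) suffix yields a feasible route with coverage at least that of $L^2\bigoplus L'$.

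It then remains to verify $\mathcal F_1\succeq\mathcal F_2$, which is the content of the four-way case analysis. Since both frontiers are concave, $\mathcal F_1$ dominates $\mathcal F_2$ as soon as it passes weakly above and to the left of the two extreme breakpoints $(t^{\min}_{\vbar}(L^2),c^{\min}_{\vbar}(L^2))$ and $(t^{\max}_{\vbar}(L^2),c^{\max}_{\vbar}(L^2))$ of $\mathcal F_2$; the default point $(t_{\vbar}(L^1),c_{\vbar}(L^1))$ of $\mathcal F_1$ --- which already beats $(t_{\vbar}(L^2),c_{\vbar}(L^2))$ by (iii) --- handles all of $\mathcal F_2$ except at most one tail. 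The four cases are precisely the four sign patterns of $c_{\vbar}(L^1)-c^{\max}_{\vbar}(L^2)$ and $t_{\vbar}(L^1)-t^{\min}_{\vbar}(L^2)$. In Case 1 the default point of $L^1$ alone dominates all of $\mathcal F_2$, so no extra condition is needed. When $L^1$'s default coverage falls short of $c^{\max}_{\vbar}(L^2)$ (Cases 2 and 4), $L^1$ must spend its token to move up the high-coverage segment of $\mathcal F_1$, and the stated inequality $t_{\vbar}(L^1)+\bigl(c^{\max}_{\vbar}(L^2)-c_{\vbar}(L^1)\bigr)/f_{\min,\vbar}(L)\le t^{\max}_{\vbar}(L^2)$ is the linear-interpolation condition ensuring it attains coverage $c^{\max}_{\vbar}(L^2)$ no later than time $t^{\max}_{\vbar}(L^2)$. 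When $L^1$'s default arrival time exceeds $t^{\min}_{\vbar}(L^2)$ (Cases 3 and 4), $L^1$ must instead spend its token to slide down the low-time segment of $\mathcal F_1$, and $c_{\vbar}(L^1)-f_{\max,\vbar}(L)\bigl(t_{\vbar}(L^1)-t^{\min}_{\vbar}(L^2)\bigr)\ge c^{\min}_{\vbar}(L^2)$ is the condition that it still carries coverage at least $c^{\min}_{\vbar}(L^2)$ at time $t^{\min}_{\vbar}(L^2)$; Case 4, deficient in both coordinates, needs both. Concavity of $\mathcal F_1$ then lifts each corner check to domination at every point of $\mathcal F_2$, completing the argument.

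The step I expect to be the main obstacle is exactly this last one: correctly matching, in each of the four positional cases, the binding extreme breakpoint of $\mathcal F_2$ with the governing linear piece of $\mathcal F_1$ --- in particular pinning down which of $f_{\min,\vbar}$ and $f_{\max,\vbar}$ (and, implicitly, the per-arc distance normalization hidden in the ratios) controls each interpolation --- and checking that concavity genuinely reduces the continuum of point comparisons to the two corner inequalities stated. The secondary delicate point is the interface with the $\vbar$-membership subcase, i.e., confirming that when $\vbar\in M(L^2)\setminus M(L^1)$ the worst-case detour bound \eqref{vbarextension} composes correctly with the token frontiers rather than interfering with them.
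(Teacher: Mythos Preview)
Your proposal is correct and follows essentially the same approach as the paper's proof: both split on whether the token lies in the suffix extension (handled directly by conditions (i)--(iii)) or on $L^2$'s already-traversed prefix, and in the latter case compare the two concave piecewise-linear token frontiers in the $(t,c)$-plane via the four positional cases, using linear interpolation along the appropriate segment of $\mathcal F_1$ to dominate the extreme breakpoints of $\mathcal F_2$. Your write-up is in fact more thorough than the paper's own sketch --- the paper argues only Case~2 in detail (waving at Cases~3 and~4 as analogous) and omits the $\vbar\in M(L^2)\setminus M(L^1)$ subcase entirely --- but the underlying geometric argument is identical.
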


\begin{figure}[ht]
    \centering
    \begin{subfigure}[t]{0.5\textwidth}
        \centering
          \captionsetup{justification=centering}
        \includegraphics[width=0.75\textwidth]{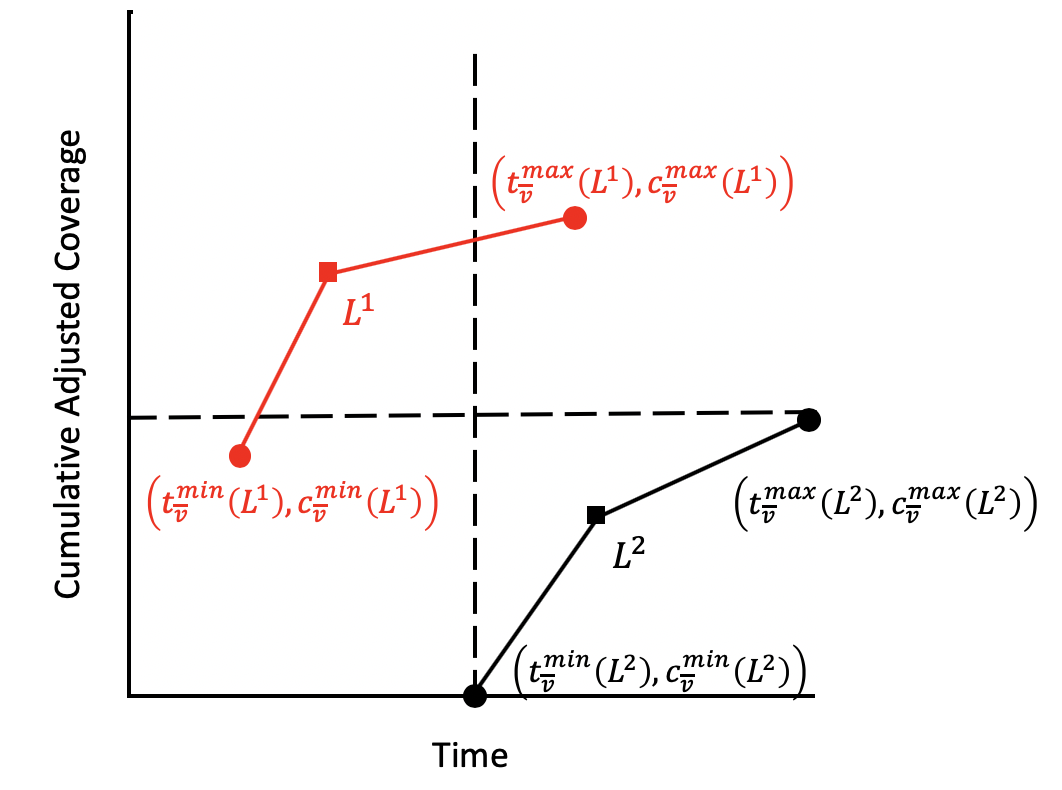}
        \caption[caption]{Case 1}
       \label{fig:trivial}
    \end{subfigure}%
    ~ 
    \begin{subfigure}[t]{0.5\textwidth}
        \centering
          \captionsetup{justification=centering}
        \includegraphics[width=0.75\textwidth]{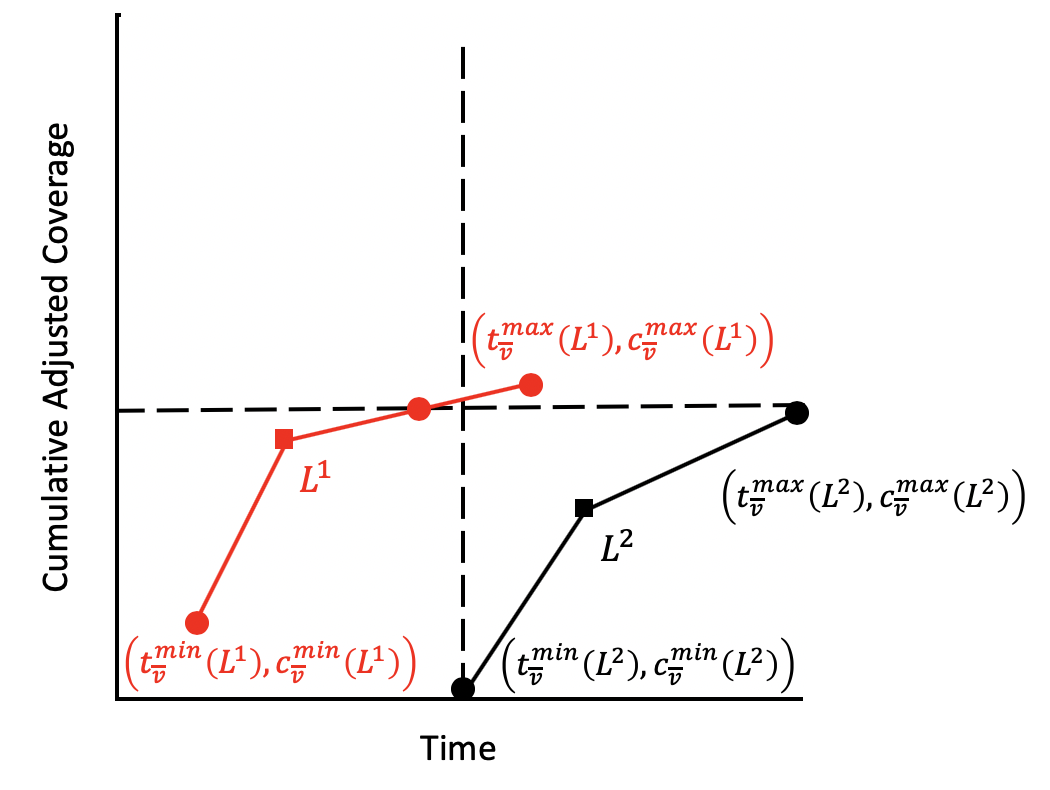}
        \caption[caption]{Case 2}
       \label{fig:reducedcost}
    \end{subfigure}

    \begin{subfigure}[t]{0.5\textwidth}
        \centering
          \captionsetup{justification=centering}
        \includegraphics[width=0.75\textwidth]{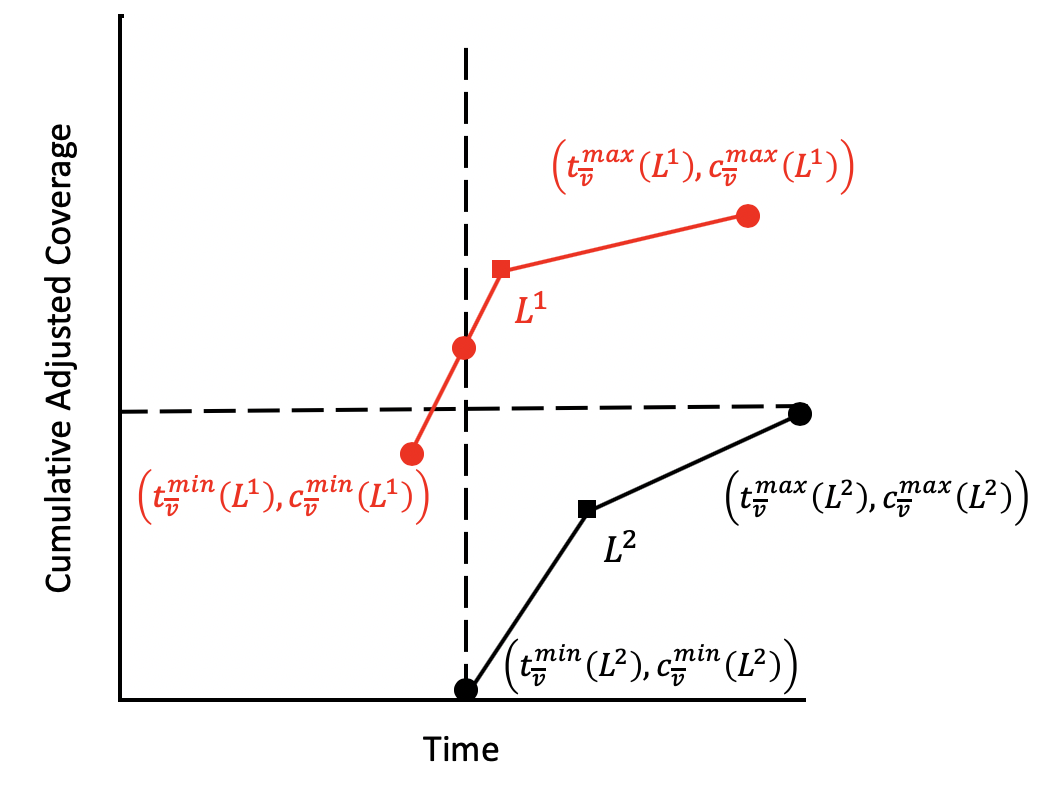}
        \caption[caption]{Case 3}
        \label{fig:time}
    \end{subfigure}%
    ~ 
    \begin{subfigure}[t]{0.5\textwidth}
        \centering
          \captionsetup{justification=centering}
        \includegraphics[width=0.75\textwidth]{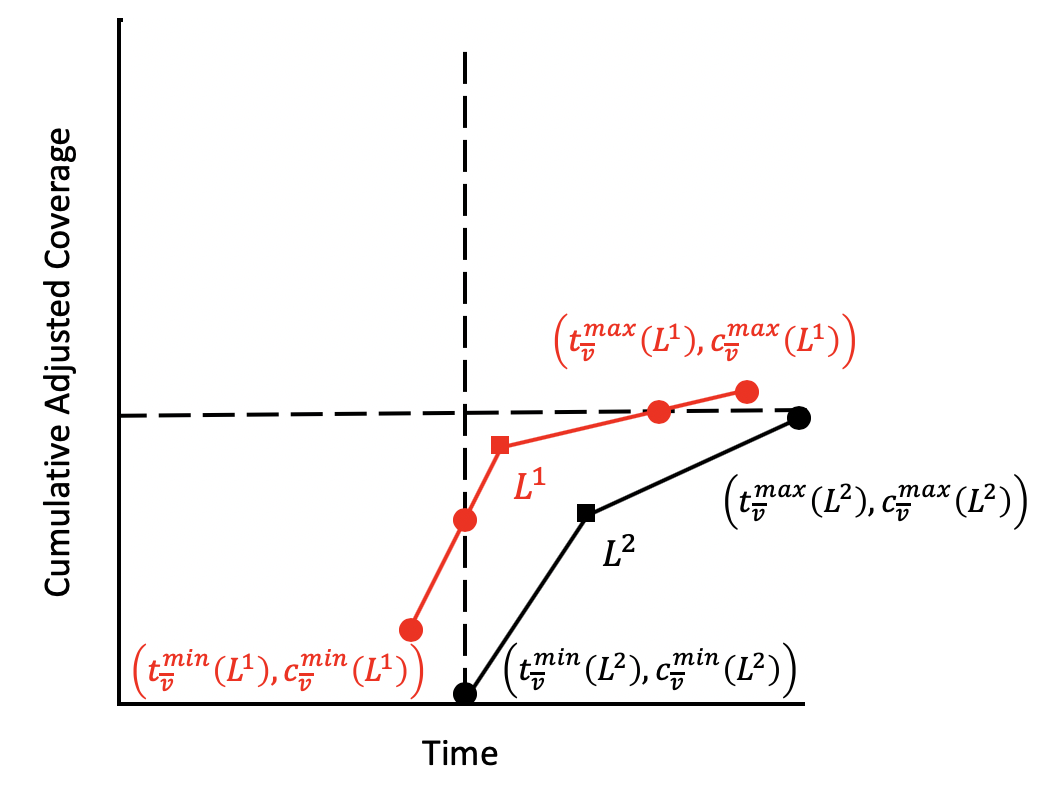}
        \caption[caption]{Case 4}
         \label{fig:both}
    \end{subfigure}

           \caption{Illustrations of the four cases pertaining to the dominance rules presented in Proposition \ref{case2dominance}}
                        \label{fig:dominancerule}
   \end{figure}
   
Figure \ref{fig:dominancerule} illustrates these four cases, where each label $L$ corresponds to a piecewise linear function with two pieces. The upper segment of each function corresponds to using the token to change the traversal time of the arc corresponding to $f_{\max,\vbar}(L)$. As its time can vary between its upper and lower bounds, and in doing so $t_{\vbar}(L)$ and $c_{\vbar}(L)$ increase, the resulting set of solutions are characterized by a line segment. A similar argument is applied to the lower segment associated with changing the traversal time of the arc corresponding to $f_{\min,\vbar}(L)$. We remark that the slope of the lower piece is steeper than that of the upper piece due to the optimal solution structure shown in Proposition~\ref{continuous-knapsack}.

\begin{proof} 
Consider any feasible extension $L'$ of $L^2$. We discuss the following two situations.

First, if the token is used in the extension $L'$ after $L^2$ is extended to a complete path, it is feasible to apply the same extension and use the token at the same arc for $L^1$, and the resulting cumulative adjusted coverage for $L^1$ is higher than $L^2$ according to conditions (i), (ii) and (iii).

Second, if the token is used on an existing arc traversed by label $L^2$, in order to be feasible for $L^1$ to apply the same extension $L'$ of $L^2$, and yet yield a higher cumulative adjusted coverage, we consider the four cases discussed in Proposition~\ref{case2dominance}. Case 1 (Fig.  \ref{fig:trivial}) is clearly trivial, since point $(t_{\vbar}(L^1),c_{\vbar}(L^1))$ dominates both extreme situations for $L^2$ by utilizing a token, that is, $c_{\vbar}(L^1) \geq c^{\max}_{\vbar}(L^2)$, and $t_{\vbar}(L^1) \leq t^{\min}_{\vbar}(L^2)$. For simplicity, we only discuss case 2 in detail below, since case 3 can be shown using a similar argument, and case 4 is a combination of case 2 and case 3.

In fact, conditions imposed in case 2 ensures that the intersection of the upper piece of $L^1$ with $c = c^{\max}_{\vbar}(L^2)$ corresponds to a time that is less than $t^{\max}_{\vbar}(L^2)$. Therefore, both ends of the upper piece of $L^2$ are dominated by some point at the upper piece of $L^1$ (in terms of both cumulative adjusted coverage and time), indicating that for any $(c'_{\vbar}(L^2),t'_{\vbar}(L^2))$ combination associated with $L^2$ that is achievable by applying a token on $L^2$, there exists a combination $(c'_{\vbar}(L^1),t'_{\vbar}(L^1))$ associated with $L^1$ by applying a token on $L^1$ such that $c'_{\vbar}(L^1) \geq c'_{\vbar}(L^2)$ and $t'_{\vbar}(L^1) \leq t'_{\vbar}(L^2)$.
\end{proof}

\section{Numerical Experiments}\label{sec:numexperiments}

We now demonstrate our model using an illustrative example (Section \ref{sec:example}) and further investigate its efficacy with larger problem instances (Section \ref{sec:results}). Next, Section \ref{sec:compresults} examines solution approaches for the special cases discussed in Section \ref{sec:langrangian}. In our experiments, we set $\beta = 1$ and $\gamma = 1$ for the rolling resistance and aerodynamic drag coefficients, respectively, and fix each vehicle's speed range such that $[\taulow,\taubar] = [1,10]$. The model in Section \ref{sec:example} was coded in AMPL and solved via Gurobi 8.0.1.
In Section \ref{sec:results}, computations were performed using an 8\--thread PC running an Intel i7 5960X 3.7 GHz processor with 128 GB RAM and instances were analyzed using Gurobi 8.0.1 with a 3600 seconds time limit. For Section \ref{sec:compresults} computations, an Intel i7-8700 3.2 GHz processor with 32 GB of RAM was used. Gurobi 8.1.1 was run in a Python 2.7 environment with a time limit of 7200 seconds. Both the level bundle method (for solving the Lagrangian dual) and the labeling algorithms (for solving the Lagrangian relaxation outlined in Section \ref{sec:langrangian}) were implemented in C\nolinebreak[4]\hspace{-.05em}\raisebox{.4ex}{\tiny\bf ++}. We use a 7200 second time limit for the level bundle method.

\subsection{Illustrative Examples}
\label{sec:example}

We 
consider a symmetric network composed of four waypoints (\tikz\draw[YellowOrange,fill=YellowOrange] (0,0) circle (.6ex);) and eight targets (\tikz\draw[ForestGreen,fill=ForestGreen] (0,0) rectangle (1ex,1ex);), where the insertion and extraction depots (\hspace{-3mm}\score{1}{1}) are identical (Figure \ref{fig:example_network}). Coverage radii (\tikz\draw [red,thick,dash pattern={on 7pt off 2pt on 1pt off 3pt}] (0,0) -- (0.85,0);) and risk radii (\tikz\draw [black, thick,densely dotted] (0,0) -- (0.85,0);) are consistent for each vehicle and target, respectively. 
For the instance parameters described in Table \ref{tab:example_parameters}, subscripts are omitted as values are made constant over their respective sets. Initially, we do not permit idling at any waypoint. 
\begin{table}
	\begin{minipage}{0.55\linewidth}
	\centering
		\includegraphics[width=0.65\textwidth]{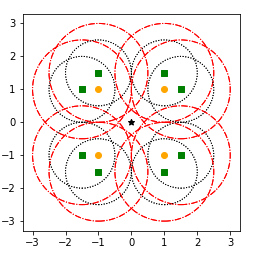}
		\captionof{figure}{Network schematic with $|V^{0}| = 4$ and $|W| = 8$} 
		\label{fig:example_network}
			\end{minipage}\hfill
	\begin{minipage}{0.45\linewidth}
	\caption{Example instance parameters}
		\label{tab:example_parameters}
		\centering
		\begin{tabular}{lc}
\hline
\multicolumn{1}{c}{\textbf{Parameter}} & \textbf{Value} \\ \hline
Number of Vehicles ($|K|$) & 2 \\
Time Windows ($[a,b]$) & $[0,100]$ \\
Energy Capacity $(E^{\text{max}})$ & 25 \\
Coverage Radii $(\bar{\eta})$ & 1.5 \\
Risk Radii $(\bar{\eta})$ & 1 \\
Target $(\pbar)$/Vehicle Priority $(\phat)$ & 1 \\
Risk Threshold $(\epsilon)$ & 50 \\
Minimum Coverage Time $(t)$ & 2 \\
Risk $(\sigma)$ /Coverage $(\rho)$ Factor & 1 \\\hline
\end{tabular}

	\end{minipage}
\end{table}

We perform sensitivity analyses on key parameters: energy capacity, time windows, risk and coverage radii, and risk and coverage levels. We show the impact each parameter has on the solution by varying its value while holding all others constant. It is important to highlight that an optimal solution will utilize the entire fleet. In fact, in the absence of restrictive parameters (e.g., time windows, energy limits, and risk thresholds), model \eqref{fullmodel} will produce identical routes for each vehicle (see Figure \ref{fig:4_8_relaxed}). Decreasing vehicle energy capacity decreases accumulated surveillance since each vehicle must now shorten its route and cover different targets independently (Figure \ref{fig:4_8_energy}). Moreover, by restricting the target's time window for observation, vehicles are assigned different routes to maximize coverage as they must travel more quickly to maintain schedule feasibility (Figure \ref{fig:4_8_timewindow}). When permitted to service waypoints, vehicles can increase the level of observation; however, due to the incurred risk restriction, the fleet cannot idle indefinitely (Figure \ref{fig:4_8_idle}). Decreasing the risk threshold, though, directly reduces the fleet's level of observation (Figure \ref{fig:4_8_threshold}). Increasing the detection area increases the accumulated risk; hence, a vehicle must sacrifice surveillance time in order to satisfy its permissible risk level. By expanding the coverage area, observation increases as a greater proportion of the route covers targets and vehicles travel through detection areas more quickly  (Figure \ref{fig:4_8_coverage}). The speed of travel does not diminish the level of surveillance due to the targets being observable over a greater proportion of the routes. As the model aims to maximize total coverage, decreasing the minimum surveillance level has no effect on the solution, while increasing this requirement can lead to infeasibility (Figure \ref{fig:4_8_level}).

\begin{figure}[ht]
    \centering
    \begin{subfigure}{0.5\textwidth}
        \centering
        \includegraphics[width=\textwidth]{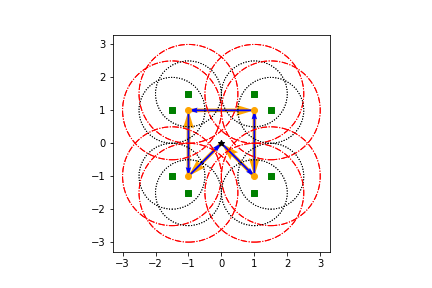}
        \caption{Baseline control parameters ($\Ccal=57.93, \Rcal=46.73)$}
        \label{fig:4_8_relaxed}
    \end{subfigure}%
    ~ 
    \begin{subfigure}{0.5\textwidth}
        \centering
        \includegraphics[width=\textwidth]{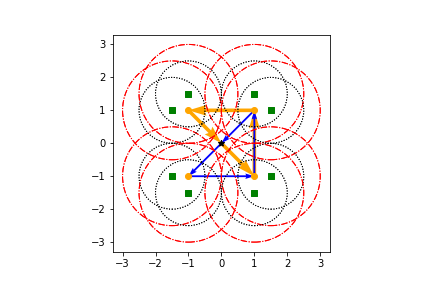}
        \caption{Energy limit $E^\text{max} = 17$ ($\Ccal=42.75, \Rcal=34.35)$}
        \label{fig:4_8_energy}
    \end{subfigure}

        \begin{subfigure}{0.5\textwidth}
        \centering
        \includegraphics[width=\textwidth]{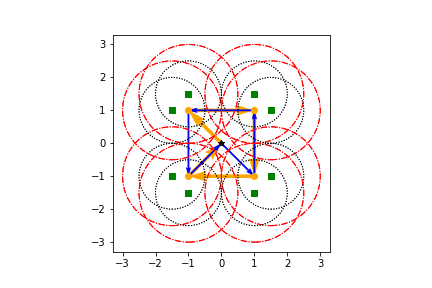}
        \caption{Time window $[a,b] = [0,7]$ ($\Ccal=47.97, \Rcal=38.82)$}
        \label{fig:4_8_timewindow}
    \end{subfigure}%
    ~ 
    \begin{subfigure}{0.5\textwidth}
        \centering
        \includegraphics[width=\textwidth]{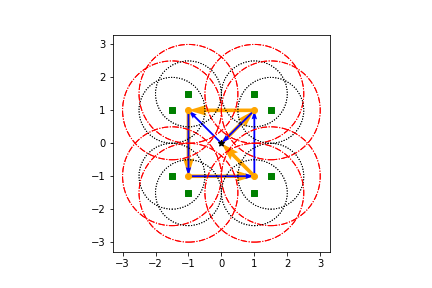}
        \caption{Idling at waypoints ($\Ccal=61.20, \Rcal=50.00)$}
         \label{fig:4_8_idle}
    \end{subfigure}
    \caption{Route schematics when Table \ref{tab:example_parameters} parameters are varied. Parameter changes are denoted in subcaptions}
    \end{figure}
   
\begin{figure}[ht]\ContinuedFloat
            \begin{subfigure}{0.5\textwidth}
        \centering
        \includegraphics[width=\textwidth]{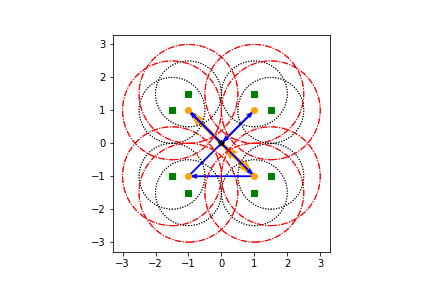}
        \caption{Risk threshold $\epsilon = 25$ $(\Ccal=32.05, \Rcal=25.00$)}
         \label{fig:4_8_threshold}
    \end{subfigure}%
    ~ 
    \begin{subfigure}{0.5\textwidth}
        \centering
        \includegraphics[width=\textwidth]{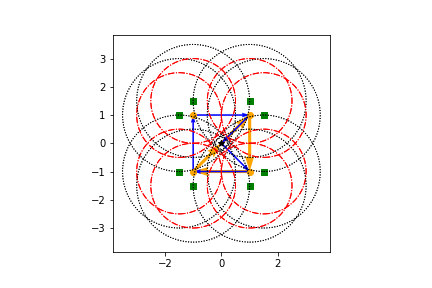}
        \caption{Risk radius $\hat{\eta} = 2$ ($\Ccal=43.79, \Rcal=50.00)$}
         \label{fig:4_8_risk}
    \end{subfigure}
    
     \begin{subfigure}{0.5\textwidth}
        \centering
        \includegraphics[width=\textwidth]{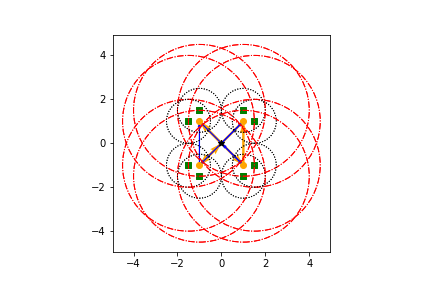}
        \caption{Coverage radius $\bar{\eta} = 3$ ($\Ccal=82.43, \Rcal=41.16)$}
         \label{fig:4_8_coverage}
    \end{subfigure}
     ~ 
                \begin{subfigure}{0.5\textwidth}
        \centering
        \includegraphics[width=\textwidth]{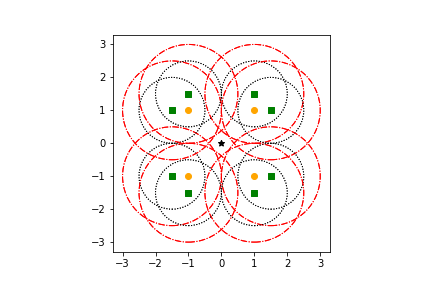}
        \caption{Coverage level $t = 7$ ($\Ccal = -, \Rcal = -$)}
         \label{fig:4_8_level}
    \end{subfigure}%

    \caption{Route schematics when Table \ref{tab:example_parameters} parameters are varied. Parameter changes are denoted in subcaptions}
    \label{fig:4_8fullanalysis}
\end{figure}

As certain restrictions become more or less binding, we not only see routes change, but travel speed as well. Figure \ref{fig:velocity} shows the speed of each vehicle over the duration of the trip subject to baseline control parameters (Figs. \ref{fig:4_8_relaxed1} and \ref{fig:4_8_relaxed2}), decreased energy capacity (Figs. \ref{fig:4_8_energy1} and \ref{fig:4_8_energy2}), shortened time windows (Figs. \ref{fig:4_8_timewindow1} and \ref{fig:4_8_timewindow2}), and permissible service (Figs. \ref{fig:4_8_service1} and \ref{fig:4_8_service2}). The time a vehicle starts service at each waypoint is denoted as a data point, whereas the time spent conducting service at a waypoint is shown as a break in the plot.

\begin{figure}[ht]
    \centering
    \begin{subfigure}{0.49\textwidth}
        \centering
        \includegraphics[width=\textwidth]{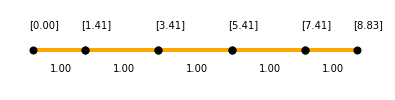}
        \caption{Vehicle 1 velocity (Fig. \ref{fig:4_8_relaxed})}
       \label{fig:4_8_relaxed1}
    \end{subfigure}%
    ~ 
    \begin{subfigure}{0.49\textwidth}
        \centering
        \includegraphics[width=\textwidth]{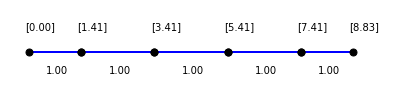}
        \caption{Vehicle 2 velocity (Fig. \ref{fig:4_8_relaxed})}
       \label{fig:4_8_relaxed2}
    \end{subfigure}

    \begin{subfigure}{0.49\textwidth}
        \centering
        \includegraphics[width=\textwidth]{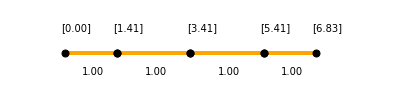}
        \caption{Vehicle 1 velocity (Fig. \ref{fig:4_8_energy})}
        \label{fig:4_8_energy1}
    \end{subfigure}%
    ~ 
    \begin{subfigure}{0.49\textwidth}
        \centering
        \includegraphics[width=\textwidth]{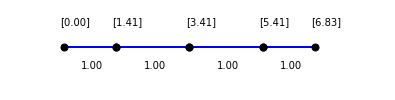}
        \caption{Vehicle 2 velocity (Fig. \ref{fig:4_8_energy})}
         \label{fig:4_8_energy2}
    \end{subfigure}

   

            \begin{subfigure}{0.49\textwidth}
        \centering
        \includegraphics[width=\textwidth]{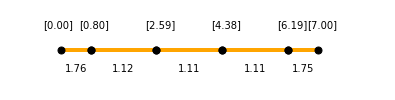}
        \caption{Vehicle 1 velocity (Fig. \ref{fig:4_8_timewindow})}
         \label{fig:4_8_timewindow1}
    \end{subfigure}%
    ~ 
    \begin{subfigure}{0.49\textwidth}
        \centering
        \includegraphics[width=\textwidth]{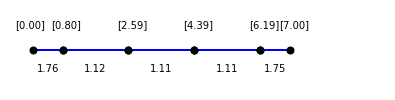}
        \caption{Vehicle 2 velocity (Fig. \ref{fig:4_8_timewindow})}
        \label{fig:4_8_timewindow2}
    \end{subfigure}

    \begin{subfigure}[t]{0.49\textwidth}
        \centering
        \includegraphics[width=\textwidth]{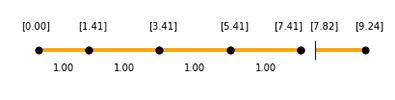}
        \caption{Vehicle 1 velocity (Fig. \ref{fig:4_8_idle})}
          \label{fig:4_8_service1}
    \end{subfigure}\hspace{-2px}
~ 
                \begin{subfigure}[t]{0.49\textwidth}
        \centering
        \includegraphics[width=\textwidth]{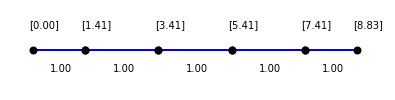}
        \caption{Vehicle 2 velocity (Fig. \ref{fig:4_8_idle})}
          \label{fig:4_8_service2}
    \end{subfigure}%

    \caption{Velocity plots corresponding to routes shown in Figure \ref{fig:4_8fullanalysis}. Arc velocities are denoted below the arc segment. Waypoints are denoted as circular markers with corresponding arrival times in brackets. Vertical lines denote the completion of any idling period}
       \label{fig:velocity}
\end{figure}

Depending upon the limiting resource, the vehicles will either utilize a shorter route, travel at a faster speed, or idle as long as possible. When permissible, vehicles will behave identically, though their paths and speeds will differentiate when resources are limiting or targets are dispersed. In particular, limiting the available energy supply causes a vehicle's total route time to decrease, but the speed it travels remains the same (Figs. \ref{fig:4_8_energy1} and \ref{fig:4_8_energy2}). This decrease in route time creates a corresponding reduction in surveillance. Shortening time windows yields a similar result. However, this causes vehicles to travel faster along certain route segments (Figs. \ref{fig:4_8_timewindow1} and \ref{fig:4_8_timewindow2}). When compared to the effect of decreased energy supply, we see that the increase in speed does not affect the objective as much as the decrease in total trip time; this is due in part to the associated coverage index per time unit. When vehicles are able to service a given waypoint, they will do so for as long as possible such that they can be routed back to the depot within the acceptable risk level (Figs. \ref{fig:4_8_service1} and \ref{fig:4_8_service2}).

\subsection{Larger Problem Instances}\label{sec:results}

Model \eqref{epsilonmodel} was coded in AMPL and validated using instances generated from the well-known Solomon VRPTW benchmarks \citep{Solomon1987}. In particular, we created two distinct networks (Network 1 and Network 2) composed of 20 waypoints (including the depots) and 20 targets from the R101 problem set. The location and time windows for the waypoints are set as the first 20 customers (Customer No. 0\--19) for Network 1 and the second 20 customers (Customer No. 20\--39) for Network 2. Targets are chosen randomly from the remaining 80 customers. The associated time windows for the insertion and extraction depots are redefined for each instance such that $a_0 = 0$ and $b_{n+1} = 10000$. Coverage radii for each vehicle are set to 10 units and the associated risk radius per target is fixed at 5 units. All targets that are not coverable within the given network or lie directly on the arc $(i,j)\in A$ are removed for feasibility and numerical purposes, respectively. 
Table \ref{tab:datasets} shows the common instance parameters for each network. Targets were randomly assigned a priority weight such that $\pbar_w\in[1,5]$ and all vehicles were weighted equally (i.e., $\phat_k = 1,\ \forall k\in K$).

\begin{table}[htbp]
\centering
\caption{Instance parameters for Network 1 and Network 2}
\label{tab:datasets}
\begin{tabular}{l|c}
\hline
\multicolumn{1}{c|}{\textbf{Parameter}} & \textbf{Value} \\ \hline
No. of Waypoints                        & 20             \\
Risk Radius                             & 5              \\
Coverage Radius                         & 10             \\
Coverage Level Per Target               & 1              \\
Energy Capacity                         & 67500          \\
Risk Threshold                          & 500            \\ \hline
\end{tabular}%
\end{table}

It is important to note that Network 1 consists of instances solvable with two vehicles, whereas Network 2 requires a fleet size of at least seven vehicles. The energy limits were chosen based upon experimental results from initial testing and are non-binding for the given networks. Table \ref{tab:computationalresults} summarizes the results.

\begin{table}[htbp]
\centering
\caption{Computational results for problem instances outlined in Table \ref{tab:datasets}}
\label{tab:computationalresults}
\resizebox{\textwidth}{!}{%
\begin{tabular}{cccccccc}
\hline
\textbf{Instance No.} & \textbf{\begin{tabular}[c]{@{}c@{}}No. of \\ Targets\end{tabular}} & \textbf{\begin{tabular}[c]{@{}c@{}}No. of \\ Vehicles\end{tabular}} & \textbf{Coverage} & \textbf{Risk}& \textbf{\begin{tabular}[c]{@{}c@{}}Branch \& Bound\\ Nodes ($x10^3)$\end{tabular}} & \textbf{Gap (\% )} & \textbf{Solve Time (s)}  \\ \hline
1\--1 & 9 & 2 & 92.4 & 85.1 & 1.0 & 0& 68.3  \\
1\--2 & 14 & 2 & 223.2 & 82.9 & 2.7 & 0& 47.8  \\
1\--3 & 12 & 2 & 152.2 & 60.3 & 1.9 & 0& 167.2  \\
1\--4 & 12 & 2 & 124.3 & 72.3 & 2.7 & 0& 39.3  \\
1\--5 & 12 & 2 & 79.2 & 39.5 & 1.6 & 0 & 128.1  \\ \hline
2\--1 & 18 & 7 & 780.8 & 500.0 & 1420.0 & 0.49& ---  \\
2\--2 & 19 & 7 & 1619.7 & 500.0 & 1158.7 & 0.44& ---  \\
2\--3 & 18 & 7 & 2041.9 & 500.0 & 634.0 & 0.73& ---  \\
2\--4 & 17 & 7 & 476.6 & 189.6 & 937.5 & 3.58& ---  \\
2\--5 & 19 & 7 & 1488.8 & 500.0 & 1120.7 & 1.27& ---  \\ \hline
\end{tabular}%
}
\end{table}

All instances from Network 1 were solved to optimality within a few minutes; however, Network 2's instances terminated upon reaching the pre\--defined time limit. While most instances have an optimality gap of less than $1\%$, instance 2\--4 is an exception. Unlike the other Network 2 instances, the risk threshold for instance 2\--4 is not binding upon termination. In fact, no feasible solution exists with incurred risk greater than 200 units. By directing Gurobi to focus on the solution's lower bound (i.e., set \textit{mipfocus} = 3) for instance 2\--4, the optimality gap decreases to 1.4\% in the same 3600 second time limit. 
Further, Table \ref{tab:computationalresults} confirms the number of branch\--and\--bound nodes explored by Gurobi is far greater for Network 2's instances than for Network 1's instances, which is due in part to both network structure and fleet size.

As outlined in Section \ref{sec:sm}, we can approximate the Pareto frontier 
to illustrate the tradeoff between our two competing objective functions (Eqs. \eqref{netcoverage} and \eqref{netrisk}). Without loss of generality, we generate an approximation to the Pareto curve for instance 1\--2 in Figure \ref{fig:47_Pareto} by setting $\delta = 10^{-1}$ (doing so for the remaining instances would show similar insights). 
By choosing a smaller value of $\delta$, a more accurate representation of the true efficient frontier would result; however, due to the continuous nature of the risk function, it is impossible to generate the full Pareto frontier exactly. This methodology will produce dominated and/or weakly-dominated solutions (which can be removed from the frontier) when energy limits and time windows are more restrictive.

\begin{figure}[htbp]
\begin{center}
\includegraphics[width=0.45\textwidth]{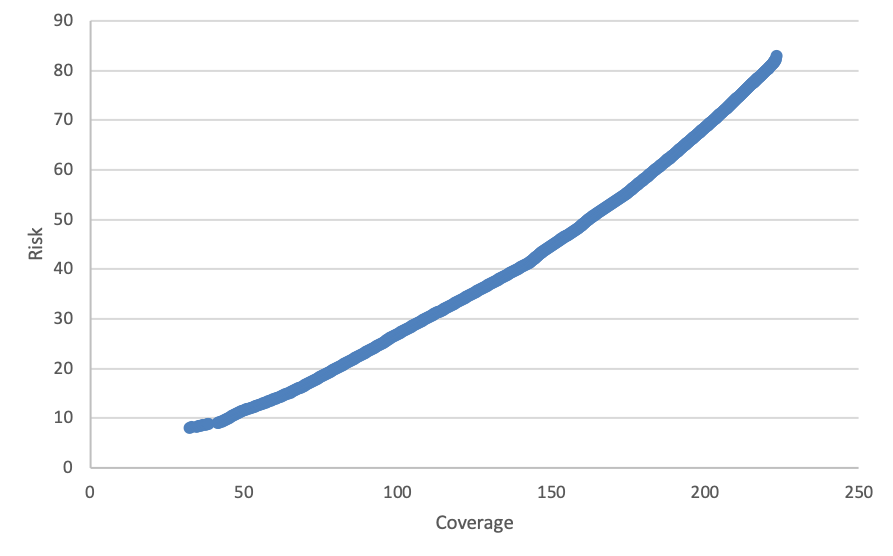}
{\caption{Approximate Pareto frontier for instance 1\--2}\label{fig:47_Pareto}}
\end{center}
\end{figure}

\subsection{Lagrangian Relaxation Based Approach for Special Cases}\label{sec:compresults}
We demonstrate our specialized algorithm (Section \ref{sec:langrangian}) for solving the Lagrangian relaxation problem~\eqref{relaxation} (special case I) and  \eqref{case2model} (special case II) by comparing the Lagrangian dual bound with the solution obtained by Gurobi for three sets of instances: 1) small ($|V^0| = 9, |W|\approx 10, \bar{\eta} =10$), 2) medium ($|V^0| = 12, |W|\approx 11, \bar{\eta} =20$), and 3) large ($|V^0| = 15, |W|\approx 12, \bar{\eta} =20$). All instances are generated using a similar approach to those discussed in Section \ref{sec:results} and utilize a single depot and one vehicle. 

\subsubsection{Special Case I}\label{sec:case1results}
Total operational time is defined as $T := |A|\times\max\{\mathbf{d}(i,j) | (i,j)\in A\}$.  After removing \eqref{netrisk}, \eqref{drt}, \eqref{energy},  \eqref{speedbdv} and \eqref{timewindowslb} from model \eqref{fullmodel}, we set $\Mbar_{ij} \leftarrow T,\ \forall(i,j)\in A$ and $b_i\leftarrow T,\ \forall i\in V$, 
and solve the reduced model using Gurobi 8.1.1.
On average, the Lagrangian dual bounds are $3.9\%$ tighter than the initial Lagrangian relaxation solutions (for the small\-- and medium\--sized instances), with instance $s$-4 seeing the largest improvement of $12.4\%$ (Table \ref{tab:GurobiDPresultsCaseI}). The initial bounds yield a gap of $4.2\%$ from the best solution obtained by Gurobi, whereas the Lagrangian dual bounds are very tight on our test instances -- less than $0.1\%$ of Gurobi's best solution. These bounds, moreover, are obtained by applying our proposed specialized algorithm before Gurobi finishes. We can make marginal improvements to those bounds considering the time required to obtain the Lagrangian dual bound, especially as instance size grows. For large\--scale instances, we do not improve upon the initial Lagrangian relaxation bound within the time limit (an average gap of $1.6\%$ from the best solution obtained by Gurobi) as not many iterations have finished processing due to the complexity for solving a single iteration. However, the proposed algorithm will eventually achieve a good bound ($<0.1\%$ difference) in the absence of a time limit (Table \ref{tab:GurobiDPresultsCaseI_large}).

\begin{table}[htbp]
\centering
\caption{Summary of computation results for both Gurobi and the Lagrangian relaxation based approach for special case I}
\label{tab:GurobiDPresultsCaseI}
\resizebox{\textwidth}{!}{%
\begin{tabular}{c|cccc|ccccc}
\hline
 & \multicolumn{4}{c}{Gurobi} & \multicolumn{5}{|c}{Lagrangian relaxation based approach} \\
\hline
 Instance & \textbf{Coverage} & \begin{tabular}[c]{@{}c@{}}\textbf{Branch \& Bound} \\ \textbf{Nodes} ($\text{x}10^3$)\end{tabular} & \textbf{Gap} (\%) & \begin{tabular}[c]{@{}c@{}}\textbf{Solve} \\ \textbf{Time} \\(s) \end{tabular} & \textbf{\begin{tabular}[c]{@{}c@{}}Initial \\ Lagrangian \\ Relaxation \\ Bound\end{tabular}} & \textbf{\begin{tabular}[c]{@{}c@{}}Lagrangian\\ Dual\\ Bound\end{tabular}} & \textbf{\begin{tabular}[c]{@{}c@{}}Level Bundle \\ Method\\ Iterations\end{tabular}} & \begin{tabular}[c]{@{}c@{}}\textbf{Initial}\\ \textbf{Lagrangian} \\ \textbf{Relaxation}\\ \textbf{Solve Time} (s)\end{tabular} & \begin{tabular}[c]{@{}c@{}}\textbf{Lagrangian} \\ \textbf{Dual Solve}\\ \textbf{Time} (s)\end{tabular} \\ \hline
$s$-0 & 3284.8 & 4119.1 & 0.0 & 285.8 & 3357.0 & 3284.8 & 25 &  0.1 & 0.6    \\
$s$-1 & 6769.0 & 4437.5 & 0.0 & 412.2 & 7037.4 & 6770.6 & 33 & 0.0 & 0.4 \\
$s$-2 & 2622.1 & 7463.0 & 0.0 & 628.3 & 2716.6 & 2622.1 & 23 & 0.0 & 0.3  \\
$s$-3 & 2014.8 & 7357.5 & 0.0 & 554.1  & 2084.2 & 2015.1 & 24  & 0.1 & 0.6 \\
$s$-4 & 2664.1 & 6012.5 & 0.0 & 466.6 & 3041.9 & 2665.5 & 27 & 0.0 & 0.7 \\ \hline
$m$-0 & 8636.5 & 63974.5 & 84.7 & --- & 8954.5 & 8636.5 & 32 & 2.6 & 72.6 \\
$m$-1 & 3480.1 & 67970.2 & 321.0 & --- & 3541.9 & 3480.1  & 22 & 3.8 & 33.5  \\
$m$-2 & 3889.0 & 70868.4 & 180.0 & ---  & 4020.5 & 3892.1 & 28 & 5.7 & 99.9  \\
$m$-3 & 6433.2 & 67861.5 & 188.6 & ---& 6545.1 & 6433.7  & 24 & 10.4 & 124.5  \\
$m$-4 & 3057.8 & 68534.2 & 266.1 & --- & 3171.3 & 3057.8  & 22 & 6.3 & 69.4 \\ \hline
$l$-0 & 12811.0 & 36545.6 & 244.9 & --- & 13163.6 & 13163.6  & 4 & 4123.1 & --- \\
$l$-1 & 20511.8 & 37970.3 & 251.9 & --- & 20608.8 & 20608.8 & 4 & 3547.9 & --- \\
$l$-2 & 11658.4 & 34604.1 & 303.8 & --- & 11771.7 & 11771.7& 10 & 1409.9 & ---   \\
$l$-3 & 12006.8 & 34072.5 & 226.2 & --- & 12316.6 & 12316.6 & 5 & 2071.4 & ---   \\
$l$-4 & 22901.4 & 36217.2 & 180.4 & --- & 23182.9 & 23182.9 & 2  & 4928.0 & ---  \\ \hline
\end{tabular}%
}
\end{table}

\begin{table}[htbp]
\centering
\caption{ Lagrangian relaxation based results for special case I large instances}
\label{tab:GurobiDPresultsCaseI_large}
\begin{tabular}{c|ccccc}
\hline
\textbf{Instance}  & \textbf{\begin{tabular}[c]{@{}c@{}}Initial \\ Lagrangian \\ Relaxation \\ Bound\end{tabular}} & \textbf{\begin{tabular}[c]{@{}c@{}}Lagrangian\\ Dual\\ Bound\end{tabular}} & \textbf{\begin{tabular}[c]{@{}c@{}}Level Bundle \\ Method\\ Iterations\end{tabular}} &  \begin{tabular}[c]{@{}c@{}}\textbf{Initial}\\ \textbf{Lagrangian} \\ \textbf{Relaxation}\\ \textbf{Solve Time} (s)\end{tabular} & \begin{tabular}[c]{@{}c@{}}\textbf{Lagrangian} \\ \textbf{Dual Solve}\\ \textbf{Time} (s)\end{tabular} \\ \hline
$l$-0 & 13163.6 & 12824.2  & 27 & 3187.0 & 29791.6 \\
$l$-1 & 20608.8 & 20513.7& 32 &  3392.6 & 56009.9  \\
$l$-2  & 11771.7 & 11659.4& 32  &  1369.1 & 35070.9 \\
$l$-3 & 12316.6 & 12009.9& 36 &  1946.8 & 41699.6  \\
$l$-4 & 23182.9 & 22901.6& 28 &  4595.4 & 81223.0  \\ \hline
\end{tabular}%
\end{table}

\subsubsection{Special Case II}
Total operational time is defined as $T := |A|\times\max\{\mathbf{d}(i,j) | (i,j)\in A\} \times 0.1$ for all instances, except for $s$-4 where $T := |A|\times\max\{\mathbf{d}(i,j) | (i,j)\in A\} \times 0.2$ as the former deadline causes infeasibility.  We solve the same reduced model outlined in Section \ref{sec:case1results} using Gurobi 8.1.1 with the addition of the following constraint:
\begin{equation}
\sum_{(i,j)\in A} t_{ij} \le T.
\end{equation}
\begin{table}[ht]
\centering
\caption{Summary of computation results for both Gurobi and the Lagrangian relaxation based approach for special case II}
\label{tab:GurobiDPresultsCaseII}
\resizebox{\textwidth}{!}{%
\begin{tabular}{c|cccc|ccccc}
\hline
 & \multicolumn{4}{c}{Gurobi} & \multicolumn{5}{|c}{Lagrangian relaxation based approach} \\
\hline
 Instance & \textbf{Coverage} & \begin{tabular}[c]{@{}c@{}}\textbf{Branch \& Bound} \\ \textbf{Nodes} ($\text{x}10^3$)\end{tabular} & \textbf{Gap} (\%) & \begin{tabular}[c]{@{}c@{}}\textbf{Solve} \\ \textbf{Time} \\(s) \end{tabular} & \textbf{\begin{tabular}[c]{@{}c@{}}Initial \\ Lagrangian \\ Relaxation \\ Bound\end{tabular}} & \textbf{\begin{tabular}[c]{@{}c@{}}Lagrangian\\ Dual\\ Bound\end{tabular}} & \textbf{\begin{tabular}[c]{@{}c@{}}Level Bundle \\ Method\\ Iterations\end{tabular}} & \begin{tabular}[c]{@{}c@{}}\textbf{Initial}\\ \textbf{Lagrangian} \\ \textbf{Relaxation}\\ \textbf{Solve Time} (s)\end{tabular} & \begin{tabular}[c]{@{}c@{}}\textbf{Lagrangian} \\ \textbf{Dual Solve}\\ \textbf{Time} (s)\end{tabular} \\ \hline
$s$-0 & 642.0 & 885.1 & 0.0 & 62.3 & 714.2 & 642.0& 25 & 3.1 & 46.2  \\
$s$-1 & 533.9 & 3030.2 & 0.0 & 277.4 & 802.4 & 535.5 & 28 & 0.4 & 6.9  \\
$s$-2 & 185.7 & 6419.7 & 0.0 & 529.7 & 280.2 & 185.7& 23 & 0.4 & 8.0   \\
$s$-3 & 279.7 & 3823.1 & 0.0 & 327.1 & 351.1 & 280.6& 27 & 3.2 & 34.1  \\
$s$-4 & 318.1 & 4390.6 & 0.0 & 572.0 & 695.8 & 319.3 & 26 & 0.6 & 13.8  \\ \hline
$m$-0 & 597.2 & 62646.2 & 1.7 & --- & 915.3 & 597.2 & 25 & 223.5 & 2521.9  \\
$m$-1 & 353.9 & 61481.7 & 3.2 & --- & 415.7 & 353.9 & 23 & 198.8 & 2548.9 \\
$m$-2 & 672.6 & 69645.8 & 0.9 & --- & 802.1 & 673.2& 29 & 211.1 & 4550.0   \\
$m$-3 & 651.2 & 68185.4 & 1.9 & --- & 763.1 & 659.5 & 10 & 573.8 & --- \\
$m$-4 & 337.0 & 65921.3 & 2.3 & --- & 450.5 & 337.0& 22 & 170.9 & 2054.6  \\ \hline
\end{tabular}}
\end{table}\\
On average, the Lagrangian dual bounds are $26.0\%$ tighter than the initial Lagrangian relaxation solutions, with instance $s$-4 again seeing the largest improvement of $54.1\%$ (Table \ref{tab:GurobiDPresultsCaseII}). The initial bounds yield a gap of $40.8\%$ from the best solution obtained by Gurobi, whereas the Lagrangian dual bounds are within an average of $0.2\%$ of the best Gurobi solution. These bounds, moreover, are obtained by applying our proposed specialized algorithm before Gurobi finishes (except for instance $m$-3). We do not include results for the large\--sized instances as the specialized algorithm does not complete a single iteration within the 7200 second time limit. This is expected, though, as the algorithm crucially depends on the state space, which increases rapidly as each label extension generates two distinct labels.

\section{Conclusions and Future Research} 
\label{sec:conc}

We have proposed and studied the multi\--vehicle covering tour problem with time windows (MCTPTW), which concerns unmanned vehicle routing with energy, time window, and coverage constraints. Motivated by applications for homeland security and public safety, our model aims to find an optimal route for each vehicle in a fleet through a set of secured waypoints in order to surveil a set of targets. We have formulated the problem as a mixed\--integer second\--order cone program with both coverage and risk objectives and test its performance using several benchmark instances. We have considered special cases where the fleet's energy supply and the time windows associated with each waypoint can be approximated by an operational deadline, and where vehicles are not subject to risk, in order to to develop a label correcting algorithm with an innovative set of dominance rules to solve the resulting Lagrangian relaxation problem given a set of dual multipliers. 
Computational experiments examining the efficacy and efficiency of the labeling algorithm show our methodology can produce high quality bounds in the same or less time than a commercial solver. 

For future study, we plan on improving the effectiveness of the labeling algorithm for large\--scale instances. Additionally, we aim to extend the model to manage non-stationary targets or targets whose locations are probabilistic. Other research extensions include the presence of time\--sensitive (on\--demand) targets. Similarly, introducing stochastic time windows or the potential loss of vehicles/waypoints would mimic various levels of uncertainty faced by surveillance operations. These extensions could better assist mission planners to adapt to changing environments within friendly and/or enemy territory.

\bibliography{MCTPTW}%

\end{document}